\newcommand{\nc}{\newcommand}
\numberwithin{equation}{section}
\newtheorem{thm}{Theorem}[section]
\newtheorem{prop}[thm]{Proposition}
\newtheorem{lem}[thm]{Lemma}
\newtheorem{cor}[thm]{Corollary}
\theoremstyle{remark}
\newtheorem{rem}[thm]{Remark}
\newtheorem{nota}[thm]{Notation}
\newtheorem{dfn}[thm]{Definition}
\nc{\gl}{\mathfrak{gl}}
\nc{\GL}{\mathfrak{GL}}
\nc{\g}{\mathfrak{g}}
\nc{\frk}{\mathfrak{k}}
\nc{\gh}{\widehat\g}
\nc{\h}{\mathfrak{h}}
\nc{\la}{\lambda}
\nc{\C}{\mathbb C }
\nc{\Z}{\mathbb Z }
\nc{\N}{\mathbb N }
\nc{\R}{\mathbb R }
\nc{\Q}{\mathbb Q }
\nc{\al}{\alpha }
\nc{\ta}{\theta}
\nc{\ve}{\varepsilon}
\nc{\ch}{{\mathop {\rm ch}}}
\nc{\Tr}{{\mathop {\rm Tr}\,}}
\nc{\Id}{{\mathop {\rm Id}}}
\nc{\ad}{{\mathop {\rm ad}}}
\nc{\bra}{\langle}
\nc{\ket}{\rangle}
\nc{\x}{{\bf x}}
\nc{\pa}{\partial}
\nc{\ld}{\ldots}
\nc{\cd}{\cdots}
\nc{\hk}{\hookrightarrow}
\nc{\T}{\otimes}
\newcommand{\bea}{\begin{equation}}
\newcommand{\ena}{\end{equation}}
\newcommand{\be}{\begin{equation*}}
\newcommand{\en}{\end{equation*}}
\nc{\gr}{\mathrm{gr}}
\nc{\ov}{\overline}
\newcommand{\bin}[2]{\genfrac{(}{)}{0pt}{}{#1}{#2}}
\nc{\cO}{\mathcal O}
\nc{\msl}{\mathfrak{sl}}
\nc{\mgl}{\mathfrak{gl}}
\nc{\U}{\mathrm U}
\nc{\V}{\EuScript V}
\newcommand{\bc}{{\mathbb C}}
\newcommand{\bz}{{\mathbb Z}}
\newcommand{\bn}{{\mathbb N}}
\newcommand{\fp}{{\mathfrak p}}
\newcommand{\fg}{{\mathfrak g}}
\newcommand{\fb}{{\mathfrak b}}
\newcommand{\ft}{{\mathfrak t}}
\newcommand{\fm}{{\mathfrak m}}
\newcommand{\fn}{{\mathfrak n}}
\newcommand{\fl}{{\mathfrak l}}
\begin{document}

\title[Zhu's algebras, $C_2$-algebras and abelian radicals]
{Zhu's algebras, $C_2$-algebras and abelian radicals}

\author{Boris Feigin, Evgeny Feigin and Peter Littelmann}
\address{Boris Feigin: \newline
Landau Institute for Theoretical Physics,\newline
prosp. Akademika Semenova, 1a, 142432, Chernogolovka, Russia,\newline
{\it and}\newline
Higher School of Economics,\newline
Myasnitskaya ul., 20,  101000,  Moscow, Russia,\newline
{\it and}\newline
Independent University of Moscow,\newline
Bol'shoi Vlas'evski per., 11, 119002, Moscow, Russia
}
\email{bfeigin@gmail.com}
\address{Evgeny Feigin:\newline
Tamm Theory Division,
Lebedev Physics Institute,\newline
Leninisky prospect, 53,
119991, Moscow, Russia,\newline
{\it and }\newline
French-Russian Poncelet Laboratory, Independent University of Moscow
}
\email{evgfeig@gmail.com}
\address{Peter Littelmann:\newline
Mathematisches Institut, Universit\"at zu K\"oln,\newline
Weyertal 86-90, D-50931 K\"oln,Germany
}
\email{littelma@math.uni-koeln.de}

\begin{abstract}
This paper consists of three parts. In the first part we prove that
Zhu's and $C_2$-algebras in type $A$ have the same dimensions.
In the second part we compute the graded decomposition of the 
$C_2$-algebras in type $A$, thus proving the Gaberdiel-Gannon's conjecture.
Our main tool is the theory of abelian radicals, which we develop
in the third part.
\end{abstract}

\maketitle

\section*{Introduction}
The theory of vertex operator algebras plays a crucial role in the
mathematical description of the structures arising from the conformal
field theories. In particular, the representation theory of VOAs
allows one to study correlation functions, partition functions and
fusion coefficients of various theories (see \cite{BF}, \cite{LL}, \cite{K2}).
The Zhu's and $C_2$-algebras
are powerful tools in the representation theory of vertex operator
algebras.

Let $\V$ be a vertex operator algebra. In \cite{Z} Zhu introduced
certain associative algebra  $A(\V)$ attached to $\V$, which turned out
to be very important for the representation
theory of $\V$. In particular, the rationality of $\V$ is conjectured
to be equivalent to finite-dimensionality and semi-simplicity of
$A(\V)$. Moreover, for rational VOAs  the representations of
the vertex algebra and of the Zhu's algebra are in one-to-one correspondence.

There is another algebra (also introduced in \cite{Z}) one can attach to $\V$.
This commutative (Poisson) algebra is called the $C_2$-algebra
and is denoted by $A_{[2]}(\V)$ (we follow the notations from \cite{GG}, \cite{GabGod}).
The $C_2$-algebra as a vector space is the quotient of $\V$ by its $C_2$-subspace.
The finite-dimensionality condition of $A_{[2]}(\V)$ (which is called the 
$C_2$-cofiniteness
condition) is  a very important characterization of $\V$ 
(\cite{ABD}, \cite{CF}, \cite{DM}, \cite{M1},\cite{M2}). In particular, it
implies that the number of isomorphism classes of irreducible $\V$-modules is finite and their characters
have certain modular properties. We note however that finite-dimensionality
of the $C_2$-algebra does not mean that $\V$ is rational (see \cite{GK}).

The algebras $A(\V)$ and $A_{[2]}(\V)$ are very closely related. It has been shown in many
examples that the Zhu's and $C_2$-algebras have the same dimensions and the former can
be regarded as a non-commutative deformation of the latter. But there are also
examples where the equality of dimensions does not hold. Note that
these two algebras can be included into the larger family of spaces and thus be
defined in a uniform way (see \cite{GabGod}, \cite{GG}). One of the special features
of the $A_{[2]}(\V)$ (compared with Zhu's algebra) is that the $C_2$-algebra
comes equipped with the special grading, induced by the conformal weights
(energy) grading on $\V$. Thus $A_{[2]}(\V)=\bigoplus_{m\ge 0} A^m_{[2]}(\V)$.
The spaces $A^m_{[2]}(\V)$ play the central role in this paper.

In this paper we are only interested in the case of vertex-operator algebras, associated
with affine Kac-Moody algebras on the integer level (see \cite{K2}, \cite{BF}).
So let $\g$ be a simple Lie algebra, $\gh$ be the corresponding affine Kac-Moody Lie algebra
(see \cite{K1}). For any non-negative integer $k$ (which is called the level) we denote by
$\V(\g;k)$ the vertex operator algebra associated with $\gh$ on the level $k$.
In particular, as a vector space, $\V(\g;k)$ is isomorphic to the level $k$ basic
(vacuum) representation $L_k$ of $\gh$. It is proved in \cite{FZ} that the corresponding
Zhu's algebra is isomorphic to the quotient of the universal enveloping algebra
$\U(\g)$ by the two-sided ideal generated by the power of the highest root
$e_\theta$:
\[
A(\V(\g;k))=\U(\g)/\bra e_\theta^{k+1} \ket.
\]
The $C_2$-algebra $A_{[2]}(\V(\g;k))$ is the quotient of $L_k$ by the subspace,
spanned by the vectors $(a\T t^{-n})v$, where $a\in \g$, $n\ge 2$ and $v\in L_k$.
In particular, each space $A_{[2]}^m(\V(\g;k))$ is equipped with the structure
of a $\g$-module.
One can see from the definition that $A_{[2]}(\V(\g;k))$
is the quotient of the symmetric algebra $S(\g)\simeq S(\g\T t^{-1})$ by some ideal.
Therefore, it is a very natural question to ask whether the dimensions of Zhu's and
$C_2$-algebras do coincide.
The answer is believed to be positive in many cases (though to the best of our knowledge is not proved
outside the level $1$ case). Note however that for $\g$ of the type $E_8$
the answer is negative (see \cite{GG}).

We show that if $\g$ is of type $A$, then
the $C_2$-algebra is a degeneration of the Zhu's algebra exactly like
$S(\g)$ is the degeneration of $\U(\g)$.
We also prove the conjecture from \cite{GG}, computing the
structure of each $A_{[2]}^m(\V(\g;k))$ as $\g$-module (see also \cite{F}
for the affine version in type $A_1$).

Our approach is based on the following observation: let $\omega_n$
be the "middle" (in the standard Bourbaki numeration \cite{B}) fundamental weight
of the Lie algebra $\gl_{2n}$.
The representation $V(k\omega_n)$ is equipped with the structure of a $\msl_n$-module
via the embeddings
$$
\msl_n\hk \gl_n\hk \gl_n\oplus \gl_n\hk \gl_{2n}
$$
(the first embedding is trivial, the second is the diagonal one and the last
embedding comes from the identification of $\gl_n\oplus\gl_n$ with
the Levi subalgebra of the parabolic subalgebra $\fp_n\subset \gl_{2n}$ corresponding to $\omega_n$).
We construct an isomorphism of $\msl_n$-modules (not of algebras)
$$
V(k\omega_n)\simeq \bigoplus_{i=0}^k A_{[2]}(\V(\msl_n;i),
$$

We derive the graded decomposition of the $C_2$-algebras by
describing $V(k\omega_i)$ as a representation of enveloping algebra of 
nilpotent radical of $\fp$, which is isomorphic to the symmetric algebra  $S(\gl_n)$.

The paper is organized as follows:

In Section $1$ we fix the main definitions and prove that the dimensions of Zhu's and
the $C_2$-algebras coincide in type $A$. The main tool is a deformation argument. 

Knowing the dimension formula, in Section $2$ we compute the graded decomposition of 
$A_{[2]}(\V(\msl_n;k))$ as $\msl_n$-module.

In Section $3$ we discuss some properties of abelian radicals and spherical representations
used in the sections before. This discussion is independent of the type of the algebra and should
be helpful to generalize the arguments to other groups of classical types. 
We work out explicitly the type $A$ case.

\section{Zhu's and $C_2$-algebras in type $A$}
\subsection{Definitions}
Let $\g$ be a simple Lie algebra. Let $\theta$ be the highest root of $\g$
and let $e_\theta\in\g$ be a highest weight
vector in the adjoint representation.
Fix a non-negative integer $k$.
Let $P_k^+(\g)$ be the set of level $k$ integrable $\g$ weights, i.e
the set of dominant integral $\g$ weights $\beta$ satisfying $(\beta,\theta)\le k$.
We denote by $V(\beta)$ the irreducible $\g$-module with highest weight $\beta$.
The following Theorem is proved in  \cite{FZ}:

\begin{thm}
The level $k$ Zhu's algebra $A(\g;k)$ is the quotient
of the universal enveloping algebra $\U(\g)$ by the two-sided
ideal generated by $e_\theta^{k+1}$:
\be
A(\g;k)=\U(\g)/\langle e_\theta^{k+1}\rangle.
\en
In addition, one has the isomorphism of $\g$-modules:
\[
A(\g;k)\simeq\bigoplus_{\beta\in P_k^+(\g)} V(\beta)\T V(\beta)^*.
\]
\end{thm}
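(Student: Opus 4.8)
The plan is to follow the strategy of \cite{FZ}: first compute the Zhu algebra of the \emph{universal} affine vertex algebra $V^k(\g)$ (the generalized Verma module of level $k$), and only then descend to the simple quotient $\V(\g;k)=L_k$. Recall that for a vertex operator algebra $\V$ one has $A(\V)=\V/O(\V)$, where $O(\V)$ is spanned by the residues $a\circ b=\mathrm{Res}_z\,(1+z)^{\mathrm{wt}\,a}z^{-2}Y(a,z)b$ for homogeneous $a$, and that the associative product on $A(\V)$ is induced by $a*b=\mathrm{Res}_z\,(1+z)^{\mathrm{wt}\,a}z^{-1}Y(a,z)b$. Writing $v_k$ for the vacuum and $a(n)=a\T t^n$ (so that each generating field has $\mathrm{wt}=1$), I would begin by setting up the linear map $\U(\g)\to A(\V)$ sending a PBW monomial $a_1\cdots a_r$ to the class of $a_1(-1)\cdots a_r(-1)v_k$.

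The first step is to show this map is a surjective algebra homomorphism onto $A(V^k(\g))$, and in fact an isomorphism there. Surjectivity rests on the reduction identities forced by $O(\V)$: unwinding the residue defining $a\circ b$ gives $a(-2)b\equiv -a(-1)b \pmod{O(\V)}$ and, more generally, relations expressing every mode $a(-n)$ with $n\ge 2$ through the modes $a(-1)$ applied to lower terms, so that $A(V^k(\g))$ is spanned by images of PBW monomials in the $a(-1)$. That the $*$-product reproduces the multiplication of $\U(\g)$ is a direct residue computation: evaluating $a*b-b*a$ from the definition of $Y$ and the affine commutator $[a(-1),b(0)]=[a,b](-1)$ recovers the bracket $[a,b]$ of $\g$. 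A PBW/filtration count then upgrades the surjection $\U(\g)\to A(V^k(\g))$ to an isomorphism.

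The second step is the descent to $\V(\g;k)$. Zhu's construction is functorial for ideals: if $I\subset V^k(\g)$ is the maximal proper ideal, so that $\V(\g;k)=V^k(\g)/I$, then $A(\V(\g;k))=A(V^k(\g))/O'$, where $O'$ is the two-sided ideal of $A(V^k(\g))\simeq\U(\g)$ generated by the image of $I$. The maximal ideal $I$ is generated by the singular vector $e_\theta(-1)^{k+1}v_k$, and tracing it through the identification of the previous step shows its class is exactly $e_\theta^{k+1}\in\U(\g)$. This already produces the surjection
\[
\U(\g)/\bra e_\theta^{k+1}\ket\twoheadrightarrow A(\V(\g;k))
\]
and hence the asserted presentation, provided this map is injective.

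Injectivity is the main obstacle, and I would establish it simultaneously with the module decomposition. By Zhu's correspondence the irreducible $A(\V(\g;k))$-modules are the top (lowest-energy) spaces of the irreducible positive-energy $\V(\g;k)$-modules; at positive integral level these are precisely the integrable modules $L(\beta)$, $\beta\in P_k^+(\g)$, whose top space is the $\g$-irreducible $V(\beta)$. Invoking the rationality of $\V(\g;k)$ for integral $k$ — so that $A(\V(\g;k))$ is finite-dimensional and semisimple — the Wedderburn decomposition gives $A(\V(\g;k))\simeq\bigoplus_{\beta\in P_k^+(\g)}\mathrm{End}(V(\beta))$ as an algebra; under the adjoint $\g$-action $\mathrm{End}(V(\beta))\simeq V(\beta)\T V(\beta)^*$, which is the second assertion. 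Comparing dimensions, $\sum_{\beta}(\dim V(\beta))^2$ on the right is then forced to equal $\dim\U(\g)/\bra e_\theta^{k+1}\ket$, so the surjection above is an isomorphism. The genuinely hard input is thus the representation theory at positive integral level — the classification of irreducible positive-energy modules by $P_k^+(\g)$ together with complete reducibility — which is exactly where the positivity of $k$ and the exponent $k+1$ enter; the remainder is bookkeeping with the residue formulas defining $O(\V)$ and the $*$-product.
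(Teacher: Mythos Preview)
The paper does not prove this theorem itself; it is quoted from Frenkel--Zhu \cite{FZ} and used as a known input. Your sketch follows exactly the strategy of \cite{FZ}: identify $A(V^k(\g))\simeq\U(\g)$ via $a_1\cdots a_r\mapsto [a_1(-1)\cdots a_r(-1)v_k]$, then pass to the simple quotient by tracking the singular vector.

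There is, however, a gap in your injectivity step. From the surjection $\U(\g)/\langle e_\theta^{k+1}\rangle\twoheadrightarrow A(\V(\g;k))$ together with the Wedderburn computation $\dim A(\V(\g;k))=\sum_{\beta\in P_k^+}(\dim V(\beta))^2$, your ``comparing dimensions'' only yields $\dim\U(\g)/\langle e_\theta^{k+1}\rangle\ge\sum_\beta(\dim V(\beta))^2$; nothing in the argument forces equality. Put differently: knowing that the \emph{generator} $e_\theta(-1)^{k+1}v_k$ of the maximal ideal $I\subset V^k(\g)$ maps to $e_\theta^{k+1}$ shows only $\langle e_\theta^{k+1}\rangle\subseteq\mathrm{image}(I)$ in $\U(\g)$, not the reverse inclusion. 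To close the argument you must either (i) show that for an ideal $I$ of a VOA generated by a single vector $v$, the image of $I$ in $A(\V)$ is the two-sided ideal generated by $[v]$ --- a statement about how Zhu's functor interacts with ideal generation, requiring a separate computation with the $\circ$ and $*$ products --- or (ii) prove independently that $\U(\g)/\langle e_\theta^{k+1}\rangle$ is already semisimple, equivalently that $\langle e_\theta^{k+1}\rangle=\bigcap_{\beta\in P_k^+}\mathrm{Ann}_{\U(\g)}V(\beta)$. Either route involves real work that the dimension count does not supply; once one of them is in place, the rest of your outline is correct.
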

The form of the description of $A(\g;k)$ arises ultimately because of the
Peter-Weyl Theorem. 
\begin{nota}
Let $S(\g)=\bigoplus_{m=0}^\infty S^m(\g)$
be the symmetric algebra of $\g$.
For $v\in S^m(\g)$ and $a\in\g$ let $av\in S^{m+1}(\g)$
be the product in the symmetric algebra.
The adjoint action on $\g$ makes 
each homogeneous summand $S^m(\g)$ into 
a $\g$-module. For $v\in S^m(\g)$ and $a\in \g$ we denote by $a\circ v\in S^m(\g)$
the adjoint action of $a$.
\end{nota}

The $C_2$-algebra associated with $\V(\g;k)$ can be described as follows.
The level $k$ $C_2$-algebra $A_{[2]}(\g;k)$ is the quotient
of the symmetric algebra $S(\g)$ by the
ideal generated by the subspace $V_{k+1}=\U(\g)\circ e_\theta^{k+1}\hk S^{k+1}(\g)$:
\be
A_{[2]}(\g;k)=S(\g)/\langle V_{k+1}\rangle.
\en
\begin{rem}
The subspace $V_{k+1}\hk S^{k+1}(\g)$ is isomorphic to the
irreducible $\g$-module $V((k+1)\theta)$ of highest weight $(k+1)\theta$.
The algebra $A_{[2]}(\g;k)$ is naturally a $\g$-module,
the module structure being induced by the adjoint action. Note that 
$A_{[2]}(\g;k)$ is {\bf not} a $\g\oplus\g$-module, differently from $A(\g;k)$.
\end{rem}

Consider the standard filtration $F_\bullet$ on the universal enveloping
algebra $\U(\g)$, such that $\gr_\bullet F\simeq S(\g)$. Let
$F_\bullet(k)$ be the induced filtration on the quotient algebra
$A(\g;k)$.
We have an obvious surjection
\begin{equation}
A_{[2]}(\g;k)\to \gr_\bullet F(k).
\end{equation}
Therefore, we have a surjective homomorphism of $\g$-modules
\begin{equation}\label{ge}
A_{[2]}(\g;k)\to A(\g;k)
\end{equation}
and thus
$\dim A_{[2]}(\g;k)\ge \sum_{\beta\in P_k^+(\g)} (\dim V(\beta))^2 $.
A natural question is: when does this inequality turn into an equality?
In this paper we are also interested in the degree grading on $A_{[2]}(\g;k)$ and in the
corresponding graded decomposition into the direct sum of $\g$-modules. Let
\[
S(\g)=\bigoplus_{m\ge 0} S^m(\g)
\]
be the degree decomposition of the symmetric algebra. This decomposition induces
the decomposition of the $C_2$-algebra:
\begin{equation}
A_{[2]}(\g;k)=\bigoplus_{m\ge 0} A_{[2]}^m(\g;k).
\end{equation}
Each space $A_{[2]}^m(\g;k)$ is naturally a representation of $\g$.
Our main question is as follows:
{\it Find the decomposition of $A_{[2]}^m(\g;k)$
into the direct sum of irreducible $\g$-modules.}
The conjectural answer in type $A$ is given in \cite{GG}.
We prove the conjecture in the next sections.

\subsection{Comparing dimensions in type $A$}
In what follows we restrict ourselves to the case $\g=\msl_n$. So
we omit $\g$ in the notation of Zhu's and $C_2$-algebras:
\[
A(k)=A(\msl_n;k),\quad A_{[2]}(k)=A_{[2]}(\msl_n;k),\quad A_{[2]}^m(k)=A_{[2]}^m(\msl_n;k).
\]
It turns out that it is very convenient to consider the algebras $\msl_n$ and $\gl_n$ and
their representations simultaneously. We fix some notation:
let $\omega_1,\dots,\omega_{n-1}$ be the fundamental weights for $\msl_n$ and
let $\omega_n$ be the additional fundamental weight for $\gl_n$.
For a partition $\la=(\la_1\ge \dots\ge \la_n)$ with $\la_i\in \Z$,  $\la_n\ge 0$ we denote
by $V(\la)$ the corresponding $\gl_n$-module of highest weight:
$$
(\la_1-\la_2)\omega_1+\ldots+(\la_{n-1}-\la_{n})\omega_{n-1}+\la_{n}\omega_{n}.
$$
(We note that $V(\la)$ can be obtained by applying the Schur functor $S_\la$ to the standard
$\gl_n$-module $V$, see for example \cite{FH}, \S6). 
For a partition
$\beta=(\beta_1\ge\dots\ge \beta_{n-1})$, $\beta_i\in\Z$, $\beta_{n-1}\ge 0$ let $V(\beta)$
be the corresponding $\msl_n$-module of highest weight.
$$
(\beta_1-\beta_2) \omega_1 +\dots + (\beta_{n-2}-\beta_{n-1})\omega_{n-2}+\beta_{n-1}\omega_{n-1}.
$$
We have the following restriction isomorphism
\begin{equation}
\left. 
V(\la_1,\dots,\la_n)\right |_{\msl_n}\simeq V(\la_1-\la_n,\dots,\la_{n-1}-\la_n).
\end{equation}
Note also that
\[
P_k^+=P_k^+(\msl_n)=\{(\beta_1,\dots,\beta_{n-1}):\ \beta_1\le k\}.
\]

Our goal is to prove the following theorem:
\begin{thm}\label{theoremone}
$A_{[2]}(k)$ and $A(k)$ are isomorphic as $\msl_n$-modules.
\end{thm}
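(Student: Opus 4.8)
The plan is to deduce the theorem from an equality of dimensions. By the surjection \eqref{ge} of $\msl_n$-modules we already have $\dim A_{[2]}(k)\ge \dim A(k)$, and since a surjection between finite-dimensional modules of equal dimension is an isomorphism, it suffices to prove the reverse inequality $\dim A_{[2]}(k)\le \dim A(k)$; equivalently, that the two $\msl_n$-characters agree. Recall from \cite{FZ} that $\dim A(k)=\sum_{\beta\in P_k^+}(\dim V(\beta))^2$. I would obtain the matching upper bound not by analysing $A_{[2]}(k)$ in isolation, but by identifying the whole sum $\bigoplus_{i=0}^k A_{[2]}(i)$ with a single irreducible $\gl_{2n}$-module and computing that module's dimension in a second, independent way.

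The first, routine half is to show that $V(k\omega_n)\simeq\bigoplus_{i=0}^k A(i)$ as $\msl_n$-modules. Restricting the rectangular module $V(k\omega_n)$ (highest weight the partition $(k^n)$) along $\gl_n\oplus\gl_n\hk\gl_{2n}$ is the classical decomposition of a Schur functor of a direct sum: $V(k\omega_n)|_{\gl_n\oplus\gl_n}\simeq\bigoplus_{\mu\subseteq(k^n)} S_\mu(\C^n)\T S_{\tilde\mu}(\C^n)$, where $\tilde\mu_i=k-\mu_{n+1-i}$ is the complement in the box. Using $S_{\tilde\mu}(\C^n)\simeq S_\mu(\C^n)^*\T(\det)^k$ and restricting to the diagonal $\msl_n$, the second factor becomes $V(\mu)^*$ and the determinant twist disappears, so $V(k\omega_n)|_{\msl_n}\simeq\bigoplus_{\mu\subseteq(k^n)} V(\mu)\T V(\mu)^*$. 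Collecting the partitions giving the same $\msl_n$-type $\beta$ (those are $\mu=\beta+c(1^n)$ with $0\le c\le k-\beta_1$), the multiplicity of $V(\beta)\T V(\beta)^*$ is $k-\beta_1+1$, which is exactly the number of levels $i$ with $\beta_1\le i\le k$; hence $V(k\omega_n)|_{\msl_n}\simeq\bigoplus_{i=0}^k A(i)$.

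The substantive, second half---the deformation argument---is to show independently that $V(k\omega_n)|_{\msl_n}\simeq\bigoplus_{i=0}^k A_{[2]}(i)$. Here I would use that the parabolic $\fp_n\subset\gl_{2n}$ attached to $\omega_n$ has abelian nilradical, so that $V(k\omega_n)$ is a cyclic module over $\U(\fn^-)=S(\fn^-)$ generated by the highest weight vector $v$, and that under the diagonal embedding $\fn^-\simeq\gl_n\simeq\msl_n\oplus\C z$ as an $\msl_n$-module via the adjoint action. Thus $V(k\omega_n)\simeq S(\gl_n)/\mathrm{Ann}(v)$ is a graded quotient of $S(\msl_n)[z]$, and the task is to identify $\mathrm{Ann}(v)$ so that, as an $\msl_n$-module, the quotient breaks into the pieces $z^{\,k-i}\cdot\big(S(\msl_n)/\langle V_{i+1}\rangle\big)=z^{\,k-i}A_{[2]}(i)$ for $i=0,\dots,k$, the central generator $z$ recording the level. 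Granting both halves, comparing $\bigoplus_{i=0}^k A_{[2]}(i)\simeq\bigoplus_{i=0}^k A(i)$ for consecutive values of $k$ together with the level-by-level surjections \eqref{ge} forces $A_{[2]}(i)\simeq A(i)$ for every $i$ by induction (the base case $i=0$ being trivial), which is the theorem.

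The hard part is this last identification of $\mathrm{Ann}(v)$ with the family of $C_2$-ideals $\langle V_{i+1}\rangle$: one must pin down precisely which components of $S(\gl_n)$ generate the annihilator, and verify that peeling off powers of the central direction $z$ interpolates cleanly between successive levels rather than producing extra relations. This is exactly the input supplied by the theory of abelian radicals and spherical representations developed in Section $3$, and is where the special combinatorics of type $A$---the complementation $\mu\mapsto\tilde\mu$ and the duality between the generator $e_\theta^{k+1}$ of $V_{k+1}$ and the determinantal equations---must be controlled.
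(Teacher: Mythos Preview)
Your overall architecture is the one the paper uses in Section~2, but you have inverted the logical dependence, and the step you defer to Section~3 is exactly the step that Section~3 does not supply.

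Concretely: from the abelian-radical description one gets $V(k\omega_n)\simeq C(k)=S(\gl_n)/\langle S^{k+1}V\otimes S^{k+1}V^*\rangle$ and, via Proposition~\ref{quot}, \emph{surjections}
\[
A_{[2]}(k-i)\twoheadrightarrow \Big(\sum_{j\le i}D_j\Big)\Big/\Big(\sum_{j<i}D_j\Big),
\]
together with $\sum_{j\le k}D_j=C(k)$. This yields only
\[
\dim C(k)\le\sum_{i=0}^k\dim A_{[2]}(i),
\]
which, combined with $\dim C(k)=\sum_i\dim A(i)$ and the level-by-level surjections $A_{[2]}(i)\twoheadrightarrow A(i)$, gives back $\dim A_{[2]}(k)\ge\dim A(k)$---the inequality you already had. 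To run your induction you would need the reverse inequality $\sum_i\dim A_{[2]}(i)\le\dim C(k)$, i.e.\ that ``peeling off powers of $z$'' produces \emph{no extra relations}. Section~3 tells you what generates $\mathrm{Ann}(v)$ in $S(\gl_n)$; it does not tell you that the intersection of that ideal with each $z^{k-i}S(\msl_n)$ stratum is no larger than $\langle V_{i+1}\rangle$. In the paper this ``no extra relations'' statement is proved only \emph{after} Theorem~\ref{theoremone}, using Corollary~\ref{AA} as input; you cannot cite it to prove Theorem~\ref{theoremone}.

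The paper's actual proof of Theorem~\ref{theoremone} avoids $C(k)$ entirely and runs a different deformation. One introduces a second grading on $S(\msl_n)$ (the $UU^*$-grading, with $U\oplus U^*$ in degree~$0$ and $\gl_{n-1}$ in degree~$1$) and passes to the associated graded ideal $\widetilde{V_{k+1}}$. The resulting quotient is bounded above by an explicit $L\times L$-stable algebra $B(k)$ (Definition~\ref{B(k)}), and one shows $\dim B(k)\le\sum_{\beta\in P_k^+}(\dim V(\beta))^2$ by a Gelfand--Tsetlin/Pieri combinatorial count. The chain
\[
\dim A(k)\le\dim A_{[2]}(k)\le\dim B(k)\le\dim A(k)
\]
then closes up. So the missing idea in your proposal is precisely this intermediate algebra $B(k)$ and its $GL_{n-1}\times GL_{n-1}$-structure; the $\gl_{2n}$-picture you sketch is the payoff of Theorem~\ref{theoremone}, not its proof.
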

Because of \eqref{ge}, it is enough to prove that
\begin{equation}
\dim A_{[2]}(k)\le \sum_{\beta\in P_k^+} (\dim V(\beta))^2.
\end{equation}

In what follows we denote by $V$ the $n$-dimensional vector
representation of $\msl_n$. Let $\gl_{n-1}\hk \msl_n$ be the standard
embedding, i.e. $A\in  \gl_{n-1}$ is mapped onto the matrix
$$
\left(\begin{array}{cc}A & 0 \\0 & -a\end{array}\right)\in \msl_n
$$
where $a=\Tr(A)$ is the trace of $A$.
We denote by $U$ the standard $(n-1)$-dimensional vector representation of $\gl_{n-1}$.
Then
\begin{equation}
\msl_n\simeq U\oplus \gl_{n-1}\oplus U^*.
\end{equation}
To be precise, the actions on $U$ and $U^*$ are twisted by one-dimensional representations
given by the characters $\hbox{Tr}$ and $-\hbox{Tr}$. But since these actions are not important 
for the subsequent dimension counting, we omit to mention them explicitly. Note also that
\[
\gl_{n-1}\simeq \gl(U)\simeq U\T U^*, \quad V\simeq U\oplus \C.
\]

We introduce an ``intermediate" algebra $B(k)$.
In the following we often identify $S(\msl_n)$ with the symmetric algebra
$S(U\oplus \gl_{n-1}\oplus U^*)$. Since
\[
U\T V^*\simeq U\oplus U\T U^*\simeq U\oplus \gl(U),
\]
we have embeddings
\begin{equation}
\imath: S^{k+1}(U)\T S^{k+1}(V^*)\hk S^{k+1}(U\oplus \gl(U))\hk S^{k+1}(U\oplus \gl(U)\oplus U^*).
\end{equation}
Similarly,
we have embeddings
\begin{equation}
\imath^*: S^{k+1}(U^*)\T S^{k+1}(V)\hk S^{k+1}(\gl(U)\oplus U^*)\hk S^{k+1}(U\oplus \gl(U)\oplus U^*).
\end{equation}

\begin{dfn}\label{B(k)}
Define the algebra $B(k)$ as the following quotient:
$$
B(k)=S(U\oplus \gl(U) \oplus U^*)/\bra
\imath(S^{k+1}(U)\T S^{k+1}(V^*)), \imath^*(S^{k+1}(V)\T S^{k+1}(U^*))\ket.
$$
\end{dfn}

\begin{rem}\label{remarkdeform}
Let $J=\langle V_{k+1}\rangle\subset S(\g)$ 
be the ideal defining $A_{[2]}(k)$. We are going to define a new grading
on $S(\msl_n)$. Combining the results below, one can view the algebra $B(k)$ as the quotient
of the symmetric algebra $S(\msl_n)$ by the ideal $\tilde J$
obtained from $J$ by the method of the associated cone with respect to this new grading.
The new ideal is stable under the Levi subgroup
\begin{equation}\label{glembedd}
L=GL_{n-1}\subset SL_n,\quad g\mapsto \left(\begin{array}{cc}g & 0 \\0 & (\det g)^{-1}\end{array}\right)
\end{equation}
so $B(k)$ admits an $L$-action. The advantage of the new ideal is the following:
\end{rem}
\begin{lem}\label{frkrealisation}
$B(k)$ admits a $GL_{n-1}\times GL_{n-1}$-action which, restricted to the 
diagonally embedded $L$, is isomorphic to the action of $L$ on $B(k)$ induced
by the adjoint action of $L$ on $\msl_n$. 
\end{lem}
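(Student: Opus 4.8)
The plan is to exhibit the $GL_{n-1}\times GL_{n-1}$-action directly on the generating space $U\oplus\gl(U)\oplus U^*$, to check that the two relation spaces cutting out $B(k)$ are invariant, and finally to identify the diagonal restriction with the adjoint action. Throughout write $G_1\times G_2=GL_{n-1}\times GL_{n-1}$.

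First I would define the action on the three summands. Using $\gl(U)\simeq U\T U^*$, let $G_1$ act by the standard representation on $U$, by left multiplication on $\gl(U)$, and trivially on $U^*$; let $G_2$ act trivially on $U$, by right multiplication on $\gl(U)$, and by the contragredient on $U^*$. Concretely $(g_1,g_2)$ sends $X\in\gl(U)$ to $g_1 X g_2^{-1}$. Since this is a linear action on the space of generators, it extends uniquely to an action by algebra automorphisms of $S(U\oplus\gl(U)\oplus U^*)$.

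The key step is the invariance of the two relation spaces. Here I would use the identifications $U\oplus\gl(U)\simeq U\T V^*$ and $\gl(U)\oplus U^*\simeq V\T U^*$ already recorded above (with $V\simeq U\oplus\C$). Under the first one, the action just defined is exactly the $GL(U)\times GL(V)$ tensor-product action restricted to $G_1\times G_2$, where $G_1$ is the Cauchy factor $GL(U)$ and $G_2$ is embedded in $GL(V)$ via $g\mapsto\mathrm{diag}(g,1)$, i.e. acting as $g$ on $U\subset V$ and trivially on $\C$. By the Cauchy decomposition $S^{k+1}(U\T V^*)\simeq\bigoplus_{\lambda\vdash k+1} S_\lambda(U)\T S_\lambda(V^*)$ of $GL(U)\times GL(V)$-modules, the single-row summand $\imath(S^{k+1}(U)\T S^{k+1}(V^*))$ is a $GL(U)\times GL(V)$-submodule, hence a fortiori $G_1\times G_2$-invariant. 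The symmetric argument applied to $\gl(U)\oplus U^*\simeq V\T U^*$ (now with $G_1$ embedded in $GL(V)$ and $G_2=GL(U)$ acting on $U^*$ directly) shows that $\imath^*(S^{k+1}(V)\T S^{k+1}(U^*))$ is $G_1\times G_2$-invariant. Since $G_1\times G_2$ acts by algebra automorphisms, the ideal generated by these two invariant subspaces is invariant, so $B(k)$ inherits a $GL_{n-1}\times GL_{n-1}$-action.

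Finally I would restrict to the diagonal $g_1=g_2=g$, giving $L=GL_{n-1}$. On $U$ this is the standard action, on $\gl(U)$ it is $X\mapsto g X g^{-1}$, i.e. conjugation, and on $U^*$ it is the contragredient; this is precisely the adjoint action of $L$ on $U\oplus\gl(U)\oplus U^*\simeq\msl_n$, the one-dimensional $\det$-twists on $U$ and $U^*$ being immaterial as remarked after the decomposition $\msl_n\simeq U\oplus\gl_{n-1}\oplus U^*$. I expect the main obstacle to be the invariance step: one must recognize each relation space as a single Cauchy (Schur) component for the correct pair $GL(U)\times GL(V)$, which is exactly what dictates the two different embeddings of $GL_{n-1}$ into $GL(V)$ as stabilizers of the distinguished line, and only with those embeddings is the invariance transparent.
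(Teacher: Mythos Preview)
Your argument is correct and is essentially the paper's proof in different packaging. The paper realizes $U\oplus\gl(U)\oplus U^*$ concretely as the matrix space $\frk=\{A\in M_n\mid a_{n,n}=0\}$, on which the $L\times L$-action is simply $(\ell_1,\ell_2)\cdot A=\ell_1 A\ell_2^{-1}$; the decompositions $\frk=U\otimes V^*\oplus U^*=U\oplus V\otimes U^*$ are then visibly $L\times L$-stable, which makes your Cauchy argument for invariance of the two relation spaces immediate. One small point: because you let $G_2$ act trivially on $U$ and $G_1$ trivially on $U^*$, your diagonal restriction misses the $\det^{\pm 1}$-twists that the genuine adjoint action of $L\subset SL_n$ carries on those summands, so strictly speaking you have only matched the adjoint action up to characters. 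The paper's matrix model (with $\ell_i=\mathrm{diag}(g_i,(\det g_i)^{-1})$) produces these twists automatically; in your setup you can simply insert them by hand without affecting the invariance step.
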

\begin{proof}
The canonical linear map $\msl_n\rightarrow \frk=\{A=(a_{i,j})\in M_n\mid a_{n,n}=0\}$ given by
$$
\left(\begin{array}{cc}A & B \\C & -\hbox{Tr}(A)\end{array}\right) \mapsto \left(\begin{array}{cc}A & B \\C & 0 \end{array}\right)
$$
is an isomorphism of $L$-representations. Now $\frk$
admits an obvious $L\times L$-action: $(\ell_1,\ell_2)\circ A:=\ell_1 A \ell_2^{-1}$, which, restricted
to the diagonal $L$ gives back the adjoint action. The $L\times L$-action also respects the decompositions
$\frk=U\oplus \gl(U)\oplus U^*=U\oplus V\otimes U^*=U\otimes V^* \oplus U^*$ and hence
$$
B(k)\simeq S(\frk) /\langle S^{k+1}(U)\T S^{k+1}(V^*), S^{k+1}(V)\T S^{k+1}(U^*)\rangle
$$
admits an $L\times L$ action with the desired properties.
\end{proof}
\begin{lem}\label{A(k)B(k)}
$\dim A_{[2]}(k)\le \dim B(k)$.
\end{lem}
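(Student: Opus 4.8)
The plan is to exhibit $B(k)$ as an initial degeneration of $A_{[2]}(k)$ with respect to the new grading of Remark \ref{remarkdeform}, and to read off the inequality from the fact that passing to an initial ideal preserves the Hilbert function. Using the $L$-isomorphism $\msl_n\simeq\frk$ of Lemma \ref{frkrealisation} I identify $S(\msl_n)\simeq S(\frk)$ as graded $L$-algebras, so that $A_{[2]}(k)=S(\frk)/J$ with $J=\bra V_{k+1}\ket$ and $V_{k+1}\hk S^{k+1}(\frk)$ the image of $V((k+1)\theta)$. On $S(\frk)$ I use the grading recording the degree in the Levi part, $\deg\gl(U)=1$ and $\deg U=\deg U^*=0$, equivalently the one-parameter subgroup $\phi_s$ scaling $\gl(U)$ by $s\in\C^\times$ and fixing $U\oplus U^*$. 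This grading commutes with the $L\times L$-action, and I write $\tilde J=\mathrm{in}(J)$ for the initial ideal and $W_0=\mathrm{in}(V_{k+1})=\lim_{s\to\infty}\phi_s\cdot V_{k+1}$ for the span of the top-degree parts of the generators; since $J$ is homogeneous for the total degree, $\dim S(\frk)/\tilde J=\dim S(\frk)/J=\dim A_{[2]}(k)$.

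Write $P_1=\imath(S^{k+1}(U)\T S^{k+1}(V^*))$ and $P_2=\imath^*(S^{k+1}(V)\T S^{k+1}(U^*))$, so that $I_B=\bra P_1,P_2\ket$ and $B(k)=S(\frk)/I_B$. It suffices to prove the inclusion of ideals $I_B\subseteq\tilde J$: then $B(k)$ surjects onto $S(\frk)/\tilde J$, whence $\dim B(k)\ge\dim S(\frk)/\tilde J=\dim A_{[2]}(k)$. Because $I_B$ is generated in degree $k+1$ and $\tilde J\supseteq\bra W_0\ket$, this inclusion follows once I show $P_1+P_2\subseteq W_0$; I will in fact prove the equality $W_0=P_1+P_2$, which is also what is needed to upgrade the inequality to the isomorphism of Theorem \ref{theoremone}.

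The identification $W_0=P_1+P_2$ is the heart of the argument. First, $P_1$ and $P_2$ are homogeneous for the new grading: each is the Cauchy summand indexed by the one-row partition $(k+1)$ inside $S^{k+1}(U\T V^*)$, resp. $S^{k+1}(V\T U^*)$, and on the relevant block the grading is induced by a one-parameter subgroup of the corresponding general linear group, so $\phi_s$ preserves it. For the inclusion $W_0\subseteq P_1+P_2$ I use that $V_{k+1}$ is the Cartan component of $S^{k+1}(\frk)$, i.e. the common kernel of the contraction operators $S^{k+1}\to S^{k-1}\T V(\mu)$ obtained by projecting a pair of factors onto the non-Cartan constituents $V(\mu)\subset S^2(\msl_n)$. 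With respect to the new grading the leading symbols of these operators retain only the pairing between $U$ and $U^*$ (the $\gl(U)$-contractions strictly lower the Levi degree and drop out) together with the Cauchy relations inside the two blocks, and their common kernel is exactly $P_1+P_2$. This is the representation-theoretic content supplied by the abelian-radical analysis of Section 3, applied to the cominuscule grading $\frk=U\oplus\gl(U)\oplus U^*$.

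It remains to match dimensions. One has $\dim W_0=\dim V_{k+1}=\dim V((k+1)\theta)$, so together with $W_0\subseteq P_1+P_2$ the equality $W_0=P_1+P_2$ follows from the numerical identity
\[
\dim V((k+1)\theta)=2\,\dim S^{k+1}(U)\cdot\dim S^{k+1}(V)-\bigl(\dim S^{k+1}(U)\bigr)^2,
\]
which records the inclusion–exclusion $\dim(P_1+P_2)=\dim P_1+\dim P_2-\dim(P_1\cap P_2)$ with $P_1\cap P_2=S^{k+1}(U)\T S^{k+1}(U^*)$, the Cartan component of $S^{k+1}(\gl(U))$. I expect the genuine obstacle to be the inclusion $W_0\subseteq P_1+P_2$ for general $n$: controlling the leading symbols of all the higher contraction operators simultaneously is exactly the spherical-representation input of Section 3, whereas the displayed dimension identity is a routine application of the Weyl dimension formula.
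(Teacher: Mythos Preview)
Your overall framework --- degenerate $J=\langle V_{k+1}\rangle$ to its initial ideal $\tilde J$ with respect to the $UU^*$-grading, then show $I_B\subseteq\tilde J$ --- is exactly the paper's strategy. The divergence, and the gap, is in how you establish the key inclusion.

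You correctly note that what is needed is $P_1+P_2\subseteq W_0$. The paper proves precisely this, directly and elementarily: starting from $e_\theta^{k+1}\in S^{k+1}(U)\subset V_{k+1}$, it applies iterated brackets $[u_j^*,[u_{j-1}^*,\ldots,[u_1^*,\,\cdot\,]\ldots]]$ with $u_i^*\in U^*$ and reads off the top $UU^*$-degree term, which is exactly $\imath\phi_j(S^{k+1}(U)\otimes S^j(U^*))$. Summing over $j$ gives $P_1\subseteq\widetilde{V_{k+1}}=W_0$, and the analogous computation starting from $f_\theta^{k+1}$ gives $P_2\subseteq W_0$. This is a two-line bracket calculation and is all that the lemma requires.

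You instead try to prove the \emph{reverse} inclusion $W_0\subseteq P_1+P_2$ first, via ``leading symbols of the contraction operators'' cutting out the Cartan component, and then close up with the dimension identity $\dim V((k+1)\theta)=2\dim S^{k+1}(U)\dim S^{k+1}(V)-(\dim S^{k+1}(U))^2$. The dimension identity and the description $P_1\cap P_2=S^{k+1}(U)\otimes S^{k+1}(U^*)$ are fine. But your argument for $W_0\subseteq P_1+P_2$ is not a proof: you assert that the top-degree pieces of the various contractions $S^{k+1}(\msl_n)\to S^{k-1}(\msl_n)\otimes V(\mu)$ ``retain only the pairing between $U$ and $U^*$ \ldots together with the Cauchy relations inside the two blocks, and their common kernel is exactly $P_1+P_2$.'' Neither claim is justified. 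Different contractions have different top $UU^*$-degree behaviour (for instance the Killing-form contraction on a pair of $\gl(U)$ factors lowers degree by $2$, but that does not mean the surviving degree-$0$ part is governed solely by the $U$--$U^*$ pairing), and nothing in Section~3 identifies the common kernel of these leading symbols with $P_1+P_2$; Section~3 describes $V(k\omega_n)$ as a quotient of $S(\fm^-)$ for an abelian radical, which is a different statement. So as written this is a genuine gap, and without it the dimension identity gives you nothing in the needed direction.

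The fix is to abandon the detour and prove $P_1+P_2\subseteq W_0$ directly by the bracket computation above; the equality $W_0=P_1+P_2$ then follows a posteriori (this is how the paper obtains it, as a corollary of the full dimension comparison), rather than being an ingredient.
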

\begin{proof}
We prove our lemma by introducing a certain grading on $A_{[2]}(k)$ and
keeping only the highest degree terms of the relations. We then show that
the resulting space of relations contains the relations of $B(k)$.

We have on $S(\msl_n)=\oplus_{m=0}^\infty S^m(\msl_n)$ 
the standard degree grading. We introduce
a new grading called the $UU^*$-grading, by setting
\begin{equation}
\msl_n=(\msl_n)_0\oplus (\msl_n)_1,\quad (\msl_n)_0=U\oplus U^*,\ (\msl_n)_1=\gl_{n-1}.
\end{equation}
This decomposition into degree zero and degree one elements induces the $UU^*$-grading on the 
symmetric algebra
$$
S(\msl_n)=\bigoplus_{l\ge 0} S_l(\msl_n).
$$
For an element $f\in  S(\msl_n)$ let $f=\sum_{j=0}^p f_j$, $f_p\not=0$ be a decomposition of $f$ into its
homogeneous parts (with respect to the $UU^*$-grading), and denote by $gr_{UU^*}f:=f_p$
its homogeneous part of highest degree.

For a subspace $I\hk S(\msl_n)$ let $\tilde I$ be the subspace spanned by the highest degree
parts of the elements in $I$:
\[
\tilde I=\langle gr_{UU^*}f\mid f\in I\rangle.
\]
If $I$ is an ideal, then so is $\tilde I$. Moreover,
if $I$ is homogeneous with respect to the standard degree grading, then 
so is $\tilde I$, and for any $m\ge 0$ we have:
\begin{equation}\label{gr}
\dim (S(\msl_n)/I)^m = \dim (S(U\oplus \gl(U)\oplus U^*)/\tilde I)^m.
\end{equation}
The upper index $m$ is used to denote the (standard) degree grading of the quotient space.
We are going to apply the procedure above to the ideal of relations of
the algebra $A_{[2]}(k)$.

Recall that $A_{[2]}(k)=S(\msl_n)/\bra V_{k+1}\ket$.
Because of \eqref{gr}, Lemma \ref{A(k)B(k)} follows from the statement  that
the generators
\[
\imath(S^{k+1}(U)\T S^{k+1}(V^*)), \quad \imath^*(S^{k+1}(V)\T S^{k+1}(U^*))
\]
of the ideal of relations of $B(k)$ are contained in $\widetilde V_{k+1}$.
Let us show that
\begin{equation}\label{tildeI}
\imath(S^{k+1}(U)\T S^{k+1}(V^*))\hk \widetilde V_{k+1}
\end{equation}
(the arguments for the second subspace of relations of $B(k)$ are similar,
see Remark \ref{i*} for more details).
Let
\[
\phi_j: S^{k+1}(U)\T S^j(U^*)\hk S^{k+1}(U)\T S^{k+1}(V^*),\quad j=0,\dots, k+1
\]
be the embedding coming from the decomposition
\[
S^{k+1}(V^*)\simeq S^{k+1}(U^*\oplus \C)=\bigoplus_{j=0}^{k+1} S^j(U^*).
\]
Let us show that
\begin{equation}\label{phij}
\imath\phi_j(S^{k+1}(U)\T S^j(U^*))\hk (\widetilde V_{k+1})_j
\end{equation}
(the $j$-th $UU^*$-degree part of $\widetilde V_{k+1}$). We start with $j=0$. Note that
since $e_\theta\in U$, the power $e_\theta^{k+1}$ belongs to $(\widetilde V_{k+1})_0$.
Since the $UU^*$-grading on $S(\msl_n)$ is $\gl_{n-1}$ invariant, we obtain
\[
\imath\phi_0 (S^{k+1}(U))\hk (\widetilde V_{k+1})_0.
\]
Note: by definition, $e_\theta^{k+1}\in V_{k+1}$ and hence $\imath\phi_0 (S^{k+1}(U))\hk V_{k+1}$.
So in the following we just write $S^{k+1}(U))\hk V_{k+1}$.
Now let us prove \eqref{phij} for $j=1,\dots,k+1$. Let $u_1^*,\dots,u_j^*$ be some elements
of $U^*\hk\msl_n$. Then for any $u_1\dots u_{k+1}\in S^{k+1}(U)\subseteq  V_{k+1}$ 
we have
\begin{equation}\label{uu*}
[u_j^*,[u_{j-1}^*,\dots, [u_1^*, u_1\dots u_{k+1}]\dots]\in V_{k+1}.
\end{equation}
The highest $UU^*$-degree term of \eqref{uu*} is equal to
\begin{equation}\label{hdtuu*}
\sum_{1\le i_1<\dots < i_j\le k+1} u_1\dots u_{i_1-1} [u_1^*,u_{i_1}]\dots [u_j^*, u_{i_j}] u_{i_j+1}\dots u_{k+1},
\end{equation}
because $[U^*,U]\subseteq \gl_{n-1}=(\msl_n)_1$ and $[U^*,\gl_{n-1}]=U^*\hk (\msl_n)_0$.
It is easy to see that the linear span of the elements \eqref{hdtuu*}
(for all $u_i\in U$ and $u_i^*\in U^*$) coincides with
$\imath\phi_j(S^{k+1}(U)\T S^j(U^*))$. Therefore
\[
\imath\phi_j(S^{k+1}(U)\T S^j(U^*))\hk (\widetilde V_{k+1})_j
\]
which finishes the proof of the lemma.
\end{proof}

\begin{rem}\label{i*}
In order to prove the inclusion
\begin{equation}
\imath^*(S^{k+1}(U^*)\T S^{k+1}(V))\hk \widetilde V_{k+1}
\end{equation}
one starts with the element $f_\theta^{k+1}$ ($f_\theta\in\msl_n$
is the lowest weight vector) and introduce the embeddings 
\[
\phi^*_j: S^{k+1}(U^*)\T S^j(U)\hk S^{k+1}(U^*)\T S^{k+1}(V),\quad j=0,\dots, k+1.
\]
Then $\widetilde V_{k+1}=\bigoplus_{j=0}^{k+1} (\widetilde V_{k+1})_j$, where
\[
(\widetilde V_{k+1})_j=\imath\phi_j(S^{k+1}(U)\T S^j(U^*))\oplus \imath^*\phi^*_j(S^{k+1}(U^*)\T S^j(U)),
\quad j=0,\dots,k 
\] 
and
\[
(\widetilde V_{k+1})_{k+1}=\imath\phi_{k+1}(S^{k+1}(U)\T S^{k+1}(U^*))=\imath^*\phi^*_{k+1}(S^{k+1}(U^*)\T S^{k+1}(U)).
\]
\end{rem}
\vskip 3pt
In what follows we often consider quotient algebras of the type
\[
S(W_1\T W_2)/ \bra S^{k+1}(W_1)\T S^{k+1}(W_2)\ket.
\]
In Section \ref{general} (see subsection \ref{typeA}) we describe the $\gl(W_1)\oplus \gl(W_2)$-module 
structure of this algebra explicitly, see also \cite{DEP}. Below we give the description in three special 
cases: $W_1=W_2=U$; $W_1=U$, $W_2=V^*$ and $W_1=V$, $W_2=U^*$.

For a partition $\la$ let $U(\la)$ (resp. $V(\la)$) be the
irreducible $\gl_{n-1}$- (resp. $\gl_{n}$-) module with highest weight $\la$.
Then
\begin{gather}
S(U\T U^*)/\bra S^{k+1}(U)\T S^{k+1}(U^*)\ket\simeq
\bigoplus_{\substack{\la=(\la_1\ge\dots\ge \la_{n-1})\\ k\ge \la_1,\ \la_{n-1}\ge 0}}
U(\la)\T U(\la)^*,\\
\label{UV*}
S(U\T V^*)/\bra S^{k+1}(U)\T S^{k+1}(V^*)\ket\simeq
\bigoplus_{\substack{\la=(\la_1\ge\dots\ge \la_{n-1})\\ k\ge \la_1,\ \la_{n-1}\ge 0}}
U(\la)\T V(\la)^*,\\
\label{U*V}
S(V\T U^*)/\bra S^{k+1}(V)\T S^{k+1}(U^*)\ket\simeq
\bigoplus_{\substack{\la=(\la_1\ge\dots\ge \la_{n-1})\\ k\ge \la_1,\ \la_{n-1}\ge 0}}
V(\la)\T U(\la)^*.
\end{gather}

\begin{prop}
$\dim B(k)\le \sum_{\beta\in P_k^+} (\dim V(\beta))^2.$
\end{prop}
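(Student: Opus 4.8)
The plan is to start from the matrix realization of $B(k)$ given in Lemma~\ref{frkrealisation},
\[
B(k)\simeq S(\frk)/\bra S^{k+1}(U)\T S^{k+1}(V^*),\ S^{k+1}(V)\T S^{k+1}(U^*)\ket,
\]
and to exploit the two decompositions $\frk=(U\T V^*)\oplus U^*=U\oplus(V\T U^*)$, whose common ``middle'' summand is $\gl(U)$. Writing $S(\frk)=S(U\T V^*)\T_{S(\gl(U))}S(V\T U^*)$, the first relation space lies in $S(U\T V^*)=S(U\oplus\gl(U))$ and the second in $S(V\T U^*)=S(\gl(U)\oplus U^*)$. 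Let $A_1$ and $A_2$ denote the quotient algebras of \eqref{UV*} and \eqref{U*V}. Since the two relation ideals are extended from ideals in these subalgebras, which overlap precisely in $S(\gl(U))$, the first step is to verify the algebra isomorphism
\[
B(k)\simeq A_1\T_{S(\gl(U))}A_2 .
\]

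Next I would reduce the proposition to a dimension estimate. By \eqref{UV*}--\eqref{U*V} the index set $\Lambda=\{\la:\ k\ge\la_1\ge\dots\ge\la_{n-1}\ge 0\}$ is finite, so $A_1,A_2$, and hence $B(k)$, are finite-dimensional and carry the $GL_{n-1}\times GL_{n-1}$-action of Lemma~\ref{frkrealisation}. On the other side of the desired inequality, the restriction isomorphism of the text gives $\dim V(\beta)=\dim V(\beta_1,\dots,\beta_{n-1},0)$ for the associated $\gl_n$-module, and $P_k^+$ is indexed by the same set $\Lambda$; hence
\[
\sum_{\beta\in P_k^+}(\dim V(\beta))^2=\sum_{\la\in\Lambda}(\dim V(\la))^2,
\]
with $V(\la)$ now a $\gl_n$-module. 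So it is enough to bound $\dim B(k)$ by the right-hand sum, and the natural candidate is that $B(k)$, as a $GL_{n-1}\times GL_{n-1}$-module, is no larger than $\bigoplus_{\la\in\Lambda}(V(\la)|_{GL_{n-1}})\boxtimes(V(\la)^*|_{GL_{n-1}})$, each $\gl_n$-factor being restricted to the Levi $GL_{n-1}$ through one of the two embeddings.

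To produce the bound I would compute the $GL_{n-1}\times GL_{n-1}$-isotypic multiplicities of $B(k)=A_1\T_{S(\gl(U))}A_2$ from the explicit descriptions \eqref{UV*}, \eqref{U*V} together with the branching rule $GL_n\downarrow GL_{n-1}$ applied to the $V(\la)$-factors. Concretely, $B(k)$ is simultaneously a quotient of $A_1\T S(U^*)$ and of $S(U)\T A_2$, so each multiplicity is at most the minimum of the two corresponding multiplicities, and the combinatorial task is to check that these minima reassemble into the sum above. A more structural route, following \cite{DEP}, is to use that $A_1$ and $A_2$ are algebras with a straightening law, transport the straightening to the fibre product, and read off a standard-monomial basis of $B(k)$ of the required cardinality. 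Both routes specialize, for $\msl_2$, to the monomial computation in which $B(k)$ is $\bc[e,h,f]$ modulo the two ideals generated by $\{h^ae^b:a+b=k+1\}$ and $\{h^af^b:a+b=k+1\}$, with basis $\{h^\alpha e^\beta f^\gamma:\ \alpha+\beta\le k,\ \alpha+\gamma\le k\}$ of size $\sum_{\alpha=0}^{k}(k-\alpha+1)^2$, which already equals $\sum_{\beta\in P_k^+}(\dim V(\beta))^2$.

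The hard part is exactly the coupling of the two relation spaces through the shared block $\gl(U)$: neither Cartan component lies in a tensor-complemented factor of $S(\frk)$, so the two quotients cannot be taken independently, and one must show that imposing both relations at once removes no less and no more than the expected dimension. This is where the equivariant determinantal descriptions \eqref{UV*}--\eqref{U*V} (equivalently, the flatness and straightening input of \cite{DEP}) are indispensable; everything else is bookkeeping with branching multiplicities.
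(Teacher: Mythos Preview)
Your route is essentially the paper's, reorganized.  The paper argues from the \emph{middle}: it lets $B_0(k)\subset B(k)$ be the subalgebra generated by $\gl(U)$, observes that $S^{k+1}(U)\T S^{k+1}(U^*)$ lies in the defining ideal so $B_0(k)\hookrightarrow\bigoplus_{\la_1\le k}U(\la)\boxtimes U(\la)^*$, and then uses the surjection $S(U)\T B_0(k)\T S(U^*)\twoheadrightarrow B(k)$ together with Pieri to bound the multiplicity of $U(\mu)\boxtimes U(\nu)^*$ by $\#\{\la:\mu\succeq\la,\nu\succeq\la,\la_1\le k\}$.  The extra restriction $\mu_1,\nu_1\le k$ is then read off from \eqref{UV*}, \eqref{U*V}---which is exactly your observation that $B(k)$ is a quotient of both $A_1\T S(U^*)$ and $S(U)\T A_2$.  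If you expand your ``minimum'' bound (using Gelfand--Tsetlin on the $V(\la)^*$-factor and Pieri on the $S(U^*)$-factor) you get precisely the same multiplicity bound.  The residual combinatorial identity is dispatched in the paper by the explicit bijection $(\beta,\mu,\nu)\leftrightarrow(\la,\bar\mu,\bar\nu)$, $\la_i=k-\beta_{n-i-1}$, $\bar\mu_i=k-\mu_{n-1-i}$, $\bar\nu_i=k-\nu_{n-1-i}$.

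Two remarks on your write-up.  First, the isomorphism $B(k)\simeq A_1\T_{S(\gl(U))}A_2$ is correct (right-exactness plus freeness over $S(\gl(U))$), but it is not used for the inequality: the two separate surjections already give the bound, and that is all the proposition asks.  Your ``hard part'' (that the relations remove \emph{no less} than expected) is irrelevant here; equality drops out afterward from the sandwich $\dim A(k)\le\dim A_{[2]}(k)\le\dim B(k)\le\sum(\dim V(\beta))^2=\dim A(k)$, so no flatness or straightening is needed.  Second, your ``natural candidate'' that $B(k)$ is bounded \emph{as an $L\times L$-module} by $\bigoplus_{\la}V(\la)\vert_L\boxtimes V(\la)^*\vert_L$ is not what either argument proves: the multiplicity of $U(\mu)\boxtimes U(\nu)^*$ there is $\#\{\beta:\beta\succeq\mu,\beta\succeq\nu,\beta_1\le k\}$, whereas the bound actually produced is $\#\{\la:\mu\succeq\la,\nu\succeq\la\}$ for $\mu_1,\nu_1\le k$; these differ isotypically and only agree after summing dimensions (via the bijection above).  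So state the goal as a dimension inequality, not a module inequality.
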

\begin{proof}
We regard $B(k)$ as a $L\times L$-module, recall that $\msl_{n}$ decomposes
as 
$$
\msl_{n}=U\otimes U^* \oplus U\otimes \bc_{\det}\oplus \bc_{\det^{-1}}\otimes U^*.
$$
Here $\bc_{\det}$ and $\bc_{\det^{-1}}$ denote the one-dimensional representations
given by the determinant respectively its inverse. In the following we omit to indicate
the twists by these characters since they are not relevant for the dimension counting.
Let $B_0(k)\hk B(k)$ be the subalgebra generated by $\gl_{n-1}\simeq U\T U^*$. 
Since the subspace
\[
S^{k+1}(U)\T S^{k+1}(U^*)\hk S^{k+1}(\gl_{n-1})
\]
is sitting inside the space of relations of $B(k)$, there exists  an embedding of
$L\times L$-modules
\begin{equation}\label{B0}
B_0(k)\hk \bigoplus_{\substack{\la=(\la_1\ge\dots\ge \la_{n-1})\\ k\ge \la_1,\ \la_{n-1}\ge 0}}
U(\la)\T U(\la)^*.
\end{equation}
We have a surjective homomorphism of $L\times L$-modules
\begin{equation}
S(U)\T B_0(k)\T S(U^*)\to B(k).
\end{equation}
By \eqref{B0}, we obtain the $L\times L$-equivariant surjection
\begin{equation}\label{-0+}
\bigoplus_{\substack{\la=(\la_1\ge\dots\ge \la_{n-1})\\ k\ge \la_1,\ \la_{n-1}\ge 0}}
S(U) \T \left( U(\la)\T U(\la)^*\right) \T  S(U^*)\to B(k).
\end{equation}
In the following we write $\mu\succeq\la$ for {\it $\mu$ is larger and in between $\la$}, i.e.:
$$
\mu\succeq\la\Leftrightarrow \mu_1\ge\la_1\ge\mu_2\ge\la_2\ge  \ldots\ge \mu_{n-1}\ge\la_{n-1},
$$ 
and we write $\vert\la\vert$ for the sum $\sum_i \la_i$.
By the Pieri formula (see for example \cite{FH}, Appendix A) the left hand side of \eqref{-0+} is,
as $L\times L$-module, isomorphic to the direct sum
\begin{equation}\label{munu}
\bigoplus_{\substack{\la=(\la_1\ge\dots\ge \la_{n-1})\\ k\ge \la_1,\ \la_{n-1}\ge 0}}
\bigoplus_{\mu,\nu\succeq \la}
U(\mu)\T U(\nu)^*.
\end{equation}
Because of the relations in $B(k)$, not all the tensor products as in \eqref{munu}
do really appear in the decomposition of $B(k)$ as $L \times L$-module.
In fact, \eqref{UV*} and \eqref{U*V} imply that if  $U(\mu)\T U(\nu)^*$ appears in
$B(k)$ (up to twists by a character), then $\mu$ and $\nu$ are restricted by $\mu_1\le k$ and $\nu_1\le k$.
Thus our proposition follows from the equality
\begin{equation}\label{dual}
\sum_{\beta\in P_k^+} (\dim V(\beta))^2=
\sum_{\la:\ k\ge \la_1}
\sum_{\substack{k\ge \mu_1,\nu_1\\ \mu,\nu\succeq \la}}
\dim U(\mu) \dim U(\nu)^*,
\end{equation}
which is going to be proved in the following lemma.
\end{proof}

\begin{lem}
The equality in \eqref{dual} holds.
\end{lem}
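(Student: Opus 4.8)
The plan is to evaluate both sides by the Gelfand--Tsetlin branching rule for the restriction $\gl_n\downarrow\gl_{n-1}$, and then to match them term by term via a complementation involution inside the box $\{0,1,\dots,k\}^{n-1}$ of partitions with parts bounded by $k$.

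First I would rewrite the left-hand side. Since $\dim V(\beta)$ is unchanged by a central twist, I may compute it from the $\gl_n$-module with partition $(\beta_1,\dots,\beta_{n-1},0)$; the embedding $\gl_{n-1}\hk\msl_n$ used here differs from the upper-left block embedding only by the $\Tr$-twist, which is irrelevant for dimensions. The Gelfand--Tsetlin rule then gives
\begin{equation*}
\dim V(\beta)=\sum_{\mu:\ \beta\succeq\mu,\ \mu_{n-1}\ge 0}\dim U(\mu),
\end{equation*}
where $\beta\succeq\mu$ is the interlacing $\beta_1\ge\mu_1\ge\beta_2\ge\dots\ge\beta_{n-1}\ge\mu_{n-1}$ and $\mu_1\le\beta_1\le k$ is automatic. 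On the right-hand side I use $\dim U(\nu)^*=\dim U(\nu)$, so that \eqref{dual} becomes $\sum_{\beta:\ \beta_1\le k}g(\beta)^2=\sum_{\la:\ \la_1\le k}f(\la)^2$, where $g(\beta)=\dim V(\beta)$ and $f(\la)=\sum_{\nu\succeq\la,\ \nu_1\le k}\dim U(\nu)$.

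Next I would introduce the complementation $\la\mapsto\la^c$, $\la^c_i=k-\la_{n-i}$, an involution of the set of partitions inside $\{0,\dots,k\}^{n-1}$ and hence of $P_k^+$. Two elementary facts drive the argument. First, $U(\mu^c)\simeq U(\mu)^*\T\det^{k}$, so $\dim U(\mu^c)=\dim U(\mu)$. Second, complementation reverses interlacing: writing the decreasing chain $\beta_1\ge\mu_1\ge\dots\ge\beta_{n-1}\ge\mu_{n-1}$ and subtracting each entry from $k$ while reversing the order turns it into $\mu^c_1\ge\beta^c_1\ge\dots\ge\mu^c_{n-1}\ge\beta^c_{n-1}$, that is, $\beta\succeq\mu\iff\mu^c\succeq\beta^c$. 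Combining these and substituting $\nu=\mu^c$ yields
\begin{equation*}
g(\beta)=\sum_{\mu:\ \beta\succeq\mu}\dim U(\mu)=\sum_{\nu:\ \nu\succeq\beta^c,\ \nu_1\le k}\dim U(\nu)=f(\beta^c),
\end{equation*}
the constraint $\nu_1\le k$ corresponding exactly to $\mu_{n-1}\ge 0$. Since $\beta\mapsto\beta^c$ permutes $P_k^+$, summing squares gives $\sum_\beta g(\beta)^2=\sum_\la f(\la)^2$, which is \eqref{dual}.

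The computation is essentially bookkeeping once this framework is in place; the only point requiring genuine care — the main obstacle, such as it is — is to verify that the interlacing conditions on the two sides are exact mirror images under complementation, \emph{including} the boundary constraints ($\mu_{n-1}\ge 0$ versus $\nu_1\le k$), so that the involution matches the summation ranges precisely and no term is lost or double-counted.
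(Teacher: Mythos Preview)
Your proof is correct and follows essentially the same approach as the paper: expand $\dim V(\beta)$ via the Gelfand--Tsetlin branching rule and then apply the complementation involution $\la_i\mapsto k-\la_{n-i}$ on partitions in the $k\times(n-1)$ box. The paper phrases this as a bijection of triples $(\beta,\mu,\nu)\leftrightarrow(\la,\bar\mu,\bar\nu)$, while you first establish the stronger termwise identity $g(\beta)=f(\beta^c)$ and then sum squares, but this is the same argument organized slightly differently.
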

\begin{proof}
We decompose each $V(\beta)$ and $V(\beta)^*$ into the direct sum of
$\gl_{n-1}$-modules following the Gelfand-Tseitlin rule (see \cite{GT}) and keep in mind
the embedding of $L=GL_{n-1}$ in $SL_n$ (see (\ref{glembedd})). Thus our
lemma is equivalent to the following equation (again we omit twists by characters):
\begin{multline}
\sum_{\substack{\beta=(\beta_1\ge\dots\ge \beta_{n-1})\\ k\ge \beta_1,\beta_{n-1}\ge 0}}
\sum_{\mu,\nu \preceq \beta}
\dim U(\mu)\dim U(\nu)^*\\ 
=
\sum_{\substack{\la=(\la_1\ge\dots\ge \la_{n-1})\\ k\ge \la_1,\ \la_{n-1}\ge 0}}
\sum_{\substack{\bar\mu, \bar\nu\succeq \la\\  \bar\mu_1,\bar\nu_1\le k }}
\dim U(\bar\mu) \dim U(\bar\nu)^*.
\end{multline}
To prove the latter we use the bijection between the parameter sets
\begin{gather*}
(\beta,\mu,\nu)\mapsto (\la,\bar\mu,\bar\nu),\\
\la_i=k-\beta_{n-i-1},\ \bar\mu_i=k-\mu_{n-1-i},\ \bar\nu_i=k-\nu_{n-1-i}.
\end{gather*}
\end{proof}

\begin{cor}\label{AA}
$A(k)$ and $A_{[2]}(k)$ are isomorphic as $\msl_n$-modules.
\end{cor}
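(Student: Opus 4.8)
The plan is to obtain the corollary as a purely formal consequence of the dimension bounds assembled in this section, by forcing the surjection of \eqref{ge} to be bijective. Since the statement coincides with Theorem \ref{theoremone}, all that remains is to pin down $\dim A_{[2]}(k)$ from both sides and match them.

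For the lower bound I would invoke \eqref{ge}, which supplies a surjective homomorphism of $\msl_n$-modules $A_{[2]}(k)\to A(k)$; combined with the Frenkel--Zhu description quoted at the start of the section, namely $A(k)\simeq\bigoplus_{\beta\in P_k^+}V(\beta)\T V(\beta)^*$, this gives
\[
\dim A_{[2]}(k)\ge \dim A(k)=\sum_{\beta\in P_k^+}(\dim V(\beta))^2.
\]

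For the matching upper bound I would simply compose the two inequalities already established: Lemma \ref{A(k)B(k)} yields $\dim A_{[2]}(k)\le \dim B(k)$, and the Proposition immediately preceding this corollary yields $\dim B(k)\le \sum_{\beta\in P_k^+}(\dim V(\beta))^2$. Chaining them produces
\[
\dim A_{[2]}(k)\le \sum_{\beta\in P_k^+}(\dim V(\beta))^2=\dim A(k).
\]

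Combining the two displays forces $\dim A_{[2]}(k)=\dim A(k)$. The map in \eqref{ge} is therefore a surjection of finite-dimensional $\msl_n$-modules between spaces of equal dimension, hence bijective, hence an isomorphism of $\msl_n$-modules, which is exactly the assertion of the corollary. I expect no genuine obstacle at this closing step, since the assembly is formal. The real difficulty has already been dispatched upstream: in Lemma \ref{A(k)B(k)}, where the $UU^*$-degeneration $\widetilde V_{k+1}$ of the defining ideal was shown to contain the generators of the ideal cutting out $B(k)$, and in the Proposition, where the bound on $\dim B(k)$ was reduced through the Pieri rule to the combinatorial identity \eqref{dual}.
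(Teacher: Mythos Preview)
Your proposal is correct and follows precisely the paper's own argument: chain the inequalities $\dim A(k)\le \dim A_{[2]}(k)\le \dim B(k)\le \sum_{\beta\in P_k^+}(\dim V(\beta))^2=\dim A(k)$ to force equality, then conclude that the surjection \eqref{ge} is an isomorphism of $\msl_n$-modules. Your write-up is simply a more expansive version of the same squeeze.
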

\begin{proof}
We have
\begin{multline*}
\sum_{\beta\in P_k^+} (\dim V(\beta))^2=\dim A(k)\le \dim A_{[2]}(k)\\
\le \dim B(k)\le \sum_{\beta\in P_k^+} (\dim V(\beta))^2.
\end{multline*}
Hence $\dim A(k)=\dim A_{[2]}(k)$, which by \eqref{ge} implies the corollary.
\end{proof}
\begin{proof} ({\sl of Theorem~\ref{theoremone}}) The theorem is a consequence of Corollary \ref{AA}.
\end{proof}
As another consequence we get: the inequality 
$\dim S(\msl_n)/\langle \widetilde V_{k+1}\rangle\le \dim B(k)$ in the proof
of Lemma~\ref{A(k)B(k)} is an equality and hence, as claimed in Remark~\ref{remarkdeform}:
\begin{cor}
$B(k)=S(\msl_n)/\langle \widetilde V_{k+1}\rangle$.
\end{cor}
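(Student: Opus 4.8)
The plan is a dimension squeeze built on the chain of ideal inclusions already produced inside the proof of Lemma~\ref{A(k)B(k)}, together with the dimension equality of Corollary~\ref{AA}. All the ideals below are homogeneous for the standard degree grading, so every comparison may be read degree by degree; and since $A(k)=\bigoplus_{\beta\in P_k^+}V(\beta)\T V(\beta)^*$ is finite dimensional, Corollary~\ref{AA} guarantees that all the quotient algebras in sight are finite dimensional as well.

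First I would record the inclusions. Write $J=\bra V_{k+1}\ket$ for the ideal defining $A_{[2]}(k)$, write $\tilde J$ for the (standard-homogeneous) ideal spanned by the $UU^*$-highest-degree parts of the elements of $J$, and write $I_B$ for the ideal of relations of $B(k)$, so that $B(k)=S(\msl_n)/I_B$. The computation carried out in the proof of Lemma~\ref{A(k)B(k)} (completed in Remark~\ref{i*}) shows precisely that the generators of $I_B$ lie in $\widetilde V_{k+1}$; and since $V_{k+1}\subseteq J$ we also have $\widetilde V_{k+1}\subseteq \tilde J$. Hence, as ideals,
\[
I_B\ \subseteq\ \bra\widetilde V_{k+1}\ket\ \subseteq\ \tilde J .
\]
Passing to quotients (degreewise, the maps being standard-graded) gives a chain of surjective homomorphisms
\[
B(k)\ \twoheadrightarrow\ S(\msl_n)/\bra\widetilde V_{k+1}\ket\ \twoheadrightarrow\ S(\msl_n)/\tilde J .
\]

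Next I would compare dimensions. By \eqref{gr} applied to $I=J$ the rightmost quotient has the same graded dimensions as $A_{[2]}(k)=S(\msl_n)/J$, so the displayed chain yields
\[
\dim A_{[2]}(k)=\dim S(\msl_n)/\tilde J\ \le\ \dim S(\msl_n)/\bra\widetilde V_{k+1}\ket\ \le\ \dim B(k).
\]
But Corollary~\ref{AA} gives $\dim B(k)=\dim A_{[2]}(k)$, so the two ends of this chain agree and every inequality is forced to be an equality. In particular $\dim B(k)=\dim S(\msl_n)/\bra\widetilde V_{k+1}\ket$, and a surjection of finite dimensional spaces of equal dimension is an isomorphism; hence the first surjection above is an isomorphism, i.e.\ $I_B=\bra\widetilde V_{k+1}\ket$ and $B(k)=S(\msl_n)/\bra\widetilde V_{k+1}\ket$, as claimed.

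I expect no serious obstacle: the whole argument is bookkeeping around the squeeze, the essential inputs being the containment of the generators of $B(k)$ in $\widetilde V_{k+1}$ (Lemma~\ref{A(k)B(k)}) and the numerical coincidence $\dim A_{[2]}(k)=\dim B(k)$ (Corollary~\ref{AA}). The only points demanding care are that all the ideals really are homogeneous for the standard grading, so the comparison is legitimately degreewise, and that the ends of the chain are finite dimensional, so that equal dimension upgrades a surjection to an isomorphism. I would also note in passing that the same squeeze forces $\bra\widetilde V_{k+1}\ket=\tilde J$, which is exactly the identification of $B(k)$ with the associated-cone degeneration announced in Remark~\ref{remarkdeform}.
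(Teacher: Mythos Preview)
Your argument is correct and is essentially the same dimension squeeze the paper uses: the paper records it in one line, noting that the inequality $\dim S(\msl_n)/\langle\widetilde V_{k+1}\rangle\le\dim B(k)$ implicit in the proof of Lemma~\ref{A(k)B(k)} becomes an equality once the chain in the proof of Corollary~\ref{AA} collapses. You have simply unpacked that squeeze explicitly, including the intermediate ideal $\tilde J$ and the observation $\langle\widetilde V_{k+1}\rangle=\tilde J$.
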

\begin{cor}\label{AB}
$\dim A_{[2]}^m(k)=\dim B^m(k)$ for all $m\ge 0$.
\end{cor}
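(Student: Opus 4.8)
The plan is to establish the graded refinement of the dimension equality already proved in Corollary~\ref{AA}. The key observation is that all the maps and estimates in Lemma~\ref{A(k)B(k)} respect the standard degree grading, so the coarse inequality $\dim A_{[2]}(k)\le\dim B(k)$ in fact comes from a degree-by-degree inequality $\dim A_{[2]}^m(k)\le\dim B^m(k)$ for every $m\ge 0$. First I would recall that the ideal $\langle V_{k+1}\rangle$ defining $A_{[2]}(k)$ is homogeneous with respect to the standard degree grading, since $V_{k+1}\hk S^{k+1}(\msl_n)$ sits in a single homogeneous component. By the remark following \eqref{gr}, the passage $I\mapsto\widetilde I$ preserves homogeneity and satisfies $\dim(S(\msl_n)/I)^m=\dim(S(\msl_n)/\widetilde I)^m$ for each $m$ separately. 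Applying this to $I=\langle V_{k+1}\rangle$ gives $\dim A_{[2]}^m(k)=\dim\bigl(S(\msl_n)/\langle\widetilde V_{k+1}\rangle\bigr)^m$.

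Next I would invoke the corollary immediately preceding this one, which identifies $B(k)=S(\msl_n)/\langle\widetilde V_{k+1}\rangle$. Since this is an isomorphism of graded algebras (both sides are quotients of $S(\msl_n)$ by homogeneous ideals, and the identification is the identity on $S(\msl_n)$), it restricts to an isomorphism in each degree $m$, yielding
\[
\dim A_{[2]}^m(k)=\dim\bigl(S(\msl_n)/\langle\widetilde V_{k+1}\rangle\bigr)^m=\dim B^m(k)
\]
for every $m\ge 0$, which is exactly the assertion.

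The point that requires the most care is confirming that the identification $B(k)=S(\msl_n)/\langle\widetilde V_{k+1}\rangle$ is genuinely an isomorphism of \emph{graded} vector spaces and not merely an equality of total dimensions. This is where the preceding corollary does the real work: a priori one only has the surjection $S(\msl_n)/\langle\widetilde V_{k+1}\rangle\to B(k)$ coming from Lemma~\ref{A(k)B(k)}, whose kernel could in principle be concentrated in some degrees; the equality of total dimensions forces the kernel to vanish, and since both quotients are homogeneous the resulting bijection is automatically degree-preserving. I expect no further obstacle: once the graded isomorphism $B(k)\simeq S(\msl_n)/\langle\widetilde V_{k+1}\rangle$ is in hand, the conclusion is immediate from \eqref{gr} applied componentwise.
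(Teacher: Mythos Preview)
Your overall strategy is correct and coincides with the paper's implicit argument: the construction in Lemma~\ref{A(k)B(k)} is degree-preserving, so the total dimension equality from Corollary~\ref{AA} upgrades to a degree-by-degree equality. Two small slips are worth noting, though neither is fatal.

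First, when you apply \eqref{gr} with $I=\langle V_{k+1}\rangle$, what you obtain is $\dim A_{[2]}^m(k)=\dim\bigl(S(\msl_n)/\tilde I\bigr)^m$ with $\tilde I=\widetilde{\langle V_{k+1}\rangle}$, not $\langle\widetilde V_{k+1}\rangle$; a priori the ideal generated by $\widetilde V_{k+1}$ may be strictly smaller than the associated-graded ideal $\tilde I$. Second, the surjection furnished by Lemma~\ref{A(k)B(k)} goes the other way: since the generators of the defining ideal of $B(k)$ are shown to lie in $\widetilde V_{k+1}\subset\tilde I$, one has $B(k)\twoheadrightarrow S(\msl_n)/\langle\widetilde V_{k+1}\rangle\twoheadrightarrow S(\msl_n)/\tilde I$, not $S(\msl_n)/\langle\widetilde V_{k+1}\rangle\to B(k)$. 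Both issues are resolved simultaneously by the dimension chain in Corollary~\ref{AA}, which forces all three spaces to have the same total dimension and hence makes both graded surjections into graded isomorphisms; this is what justifies both the preceding corollary and your conclusion. With these corrections the argument is exactly the paper's.
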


\section{The graded decomposition of $A_{[2]}(k)$}
In this section we compute the graded decomposition of $C_2$-algebras, thus
proving the conjecture of \cite{GG}.

\begin{dfn}
We define the algebra $C(k)$ as follows:
\[
C(k)=S(V\T V^*)/\bra S^{k+1}(V)\T S^{k+1}(V^*)\ket
\]
\end{dfn}
\begin{rem}\label{permute}
Let $E_{a,b}\in\gl_n$ be the standard matrix unit. Identifying $V\T V^*$ with $\gl(V)$
we can describe the subspace $S^{k+1}(V)\T S^{k+1}(V^*)$ as the linear span
of the monomials of the form
\begin{gather*}
\sum_{\sigma\in S_{k+1}}  E_{i_1,j_{\sigma_1}} E_{i_2,j_{\sigma_2}}\dots E_{i_{k+1},j_{\sigma_{k+1}}},\\
1\le i_1\le\dots\le  i_{k+1}\le n, 1\le j_1\le\dots\le j_{k+1}\le n,
\end{gather*}
where the sum is taken over the permutation group $S_{k+1}$.
\end{rem}
\begin{rem}
Using the identification in Lemma \ref{frkrealisation}, we see that with respect to the embeddings
$L=GL_{n-1}\subset SL_n\subset GL_n$ we have:
$B(k)\simeq C(k)/\langle E_{n,n} \rangle$, also as a $L\times L$-module.
\end{rem}
The standard degree grading $S^m$  on the symmetric algebra induces a grading
on $C(k)$:
\[
C(k)=\bigoplus_{m\ge 0} C^m(k).
\]
Recall that we have the decomposition with respect to the action of $\gl_n\oplus\gl_n$:
\begin{equation}\label{2n}
C(k)\simeq \bigoplus_{\substack{\la=(\la_1\ge\dots\ge \la_n)\\ k\ge \la_1,\ \la_n\ge 0}}
V(\la)\T V(\la)^*.
\end{equation}
Moreover, using the Cauchy formula (see for example \cite{P}, \S9) for $S^m(V\otimes V^*)$ we see:
\begin{equation}\label{char}
C^m(k)\simeq \bigoplus_{\substack{\la:\ k\ge \la_1,\ \la_n\ge 0\\ \la_1+\dots + \la_n=m}}
V(\la)\T V(\la)^*.
\end{equation}
We will extract the information about $A_{[2]}^m(k)$ from \eqref{char}.

Let $\msl_n\hk \gl_n$ be again the standard embedding. As $\msl_n$-module (adjoint action)
the Lie algebra $\gl_n\simeq V\T V^*$ decomposes as
\begin{equation}\label{b}
\gl_n=\msl_n  \oplus \C c,
\end{equation}
where $\C c$ is
trivial one-dimensional module with fixed non-trivial vector $c$ (say, $c\in V\T V^*$
is the identity operator). The algebra $C(k)$ is by construction a $S(\gl_n)$- as well as a $S(\msl_n)$-module.
For $i\ge 0$ let $D_i\subset C(k)$ be the $S(\msl_n)$-submodule of $C(k)$ generated $c^i$:
\[
D_i=S(\msl_n)\cdot c^i \subset  C(k).
\]
\begin{lem}\label{sub}
The canonical surjective map $S(\msl_n)\to D_0\subset C(k)$, $f\mapsto f\cdot 1$,
induces a surjective homomorphism $A_{[2]}(k)\to D_0$ of 
$\msl_n$-modules.
\end{lem}
\begin{proof} By definition, the image of $S(\msl_n)$ in $C(k)$ is $D_0$.
Now $e_\theta=E_{1,n}$ and by Remark \ref{permute}
$e_\theta^{k+1}\in S^{k+1}(V)\T S^{k+1}(V^*)$, which proves the lemma.
\end{proof}

In general the following proposition holds.
\begin{prop}\label{quot}
For all $i=0,1,\dots,k$ we have a surjective homomorphism of
$\msl_n$-modules
\[
A_{[2]}(k-i)\to \left. \sum_{j=0}^i D_j\right /\sum_{j=0}^{i-1} D_j
\]
and $D_{k+1}\hk\sum_{j=0}^k D_j$.
\end{prop}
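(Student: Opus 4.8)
The plan is to reduce the first assertion to a single relation in $C(k)$ and then to exhibit that relation explicitly via the permanent description of Remark~\ref{permute}. Throughout I set $F_i:=\sum_{j=0}^i D_j$ (with $F_{-1}=0$); since each $D_j=S(\msl_n)\cdot c^j$ is an $S(\msl_n)$-submodule of $C(k)$, so is $F_i$. Because $c$ spans the trivial summand in \eqref{b}, it is $\msl_n$-invariant, so multiplication by $c^i$ defines an $S(\msl_n)$-linear and $\msl_n$-equivariant map
\[
\psi_i\colon S(\msl_n)\longrightarrow F_i/F_{i-1},\qquad f\longmapsto [f\cdot c^i],
\]
whose image is $(D_i+F_{i-1})/F_{i-1}=F_i/F_{i-1}$; hence $\psi_i$ is surjective. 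To obtain the desired map $A_{[2]}(k-i)=S(\msl_n)/\langle V_{k-i+1}\rangle\to F_i/F_{i-1}$ it suffices to check $\psi_i(V_{k-i+1})=0$; since $V_{k-i+1}=\U(\msl_n)\circ e_\theta^{k-i+1}$ and $\psi_i$ is $\msl_n$-equivariant, this reduces to the single relation
\[
e_\theta^{k-i+1}\,c^i\in F_{i-1}\quad\text{in }C(k).
\]
The $S(\msl_n)$-linearity of $\psi_i$ then upgrades the vanishing on the generators $V_{k-i+1}$ to vanishing on the whole ideal $\langle V_{k-i+1}\rangle$, which is what lets $\psi_i$ descend.

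To prove the boxed relation I would exploit $S(\gl_n)=S(\msl_n)[c]$, writing $E_{1,1}=\tfrac1n c+h$ with $h\in\msl_n$ and refining the standard degree by the $c$-degree. Consider the permanent of Remark~\ref{permute} with row multiset $\{1^{(k+1)}\}$ and column multiset $\{1^{(i)},n^{(k+1-i)}\}$ (both weakly increasing). As every row index equals $1$, each summand equals $E_{1,1}^{i}E_{1,n}^{k-i+1}$, so the element is $(k+1)!\,E_{1,1}^{i}E_{1,n}^{k-i+1}\in S^{k+1}(V)\T S^{k+1}(V^*)$ and therefore vanishes in $C(k)$. Substituting $E_{1,1}=\tfrac1n c+h$ and using $e_\theta=E_{1,n}\in S(\msl_n)$, its top $c$-degree part is $\tfrac{(k+1)!}{n^i}\,e_\theta^{k-i+1}c^i$, while the remaining terms have $c$-degree $\le i-1$ and hence lie in $\bigoplus_{j\le i-1}S(\msl_n)c^j$, whose image is $F_{i-1}$. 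This gives $e_\theta^{k-i+1}c^i\in F_{i-1}$, as required.

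For the final assertion $D_{k+1}\subseteq\sum_{j=0}^k D_j=F_k$: as $F_k$ is an $S(\msl_n)$-submodule and $D_{k+1}=S(\msl_n)\cdot c^{k+1}$, it suffices to show $c^{k+1}\in F_k$. I take the permanent with row and column multiset both equal to $\{1^{(k+1)}\}$, which equals $(k+1)!\,E_{1,1}^{k+1}$ and so vanishes in $C(k)$; its top $c$-degree part is $\tfrac{(k+1)!}{n^{k+1}}c^{k+1}$ and all other terms have $c$-degree $\le k$, giving $c^{k+1}\in F_k$.

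The only genuinely delicate point is the $c$-degree bookkeeping: one must verify that the degree-$i$ part (in $c$) of the chosen permanent is a nonzero multiple of $e_\theta^{k-i+1}c^i$ with no other monomials surviving. The choice of all row indices equal to $1$ is what makes this transparent, since it collapses the permanent to the single monomial $E_{1,1}^{i}E_{1,n}^{k-i+1}$ \emph{before} passing to $S(\msl_n)[c]$, so that only the $E_{1,1}^i$ factor carries powers of $c$ and the extraction of the top part is immediate. I expect this choice, rather than a more symmetric one built from several distinct diagonal entries (which would force case distinctions once $k+1>n$), to be the key to a single argument valid for all $n\ge 2$ and all $0\le i\le k$.
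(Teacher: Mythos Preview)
Your proof is correct and follows the same overall strategy as the paper: reduce to the single relation $e_\theta^{\,k-i+1}c^i\in F_{i-1}$ in $C(k)$, then produce it from a permanent in $S^{k+1}(V)\otimes S^{k+1}(V^*)$ (Remark~\ref{permute}) together with the binomial link between $c$ and a diagonal matrix unit.

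The one tactical difference is worth recording. The paper expands in the opposite direction: it writes $c^i=\bigl((c-nE_{n,n})+nE_{n,n}\bigr)^i$ and then must show, for every $0\le p\le i$, that $E_{n,n}^{p}\in S(\msl_n)\,\mathrm{span}\{1,c,\dots,c^{p}\}$, which it does by a separate induction (equations~\eqref{s}--\eqref{p}). You instead start from the permanent $E_{1,1}^{\,i}E_{1,n}^{\,k+1-i}$ and expand $E_{1,1}^{\,i}=\bigl(\tfrac{1}{n}c+h\bigr)^i$ directly in powers of $c$; since $h\in\msl_n$, every lower-order term lands in $F_{i-1}$ automatically and no auxiliary induction is needed. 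This is a genuine streamlining: one relation, one binomial expansion, done. The paper's route has the (minor) advantage that the intermediate statement $E_{n,n}^{p}\in S(\msl_n)\,\mathrm{span}\{1,c,\dots,c^{p}\}$ is of independent interest, but for the proposition at hand your argument is shorter and more transparent.
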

\begin{proof} 
It suffices to show that for any $0\le i\le k+1$ the following
is true in the symmetric algebra $S(V\T V^*)$:
\begin{equation}
e_\theta^{k+1-i} c^i\in S^{k+1}(V)\T S^{k+1}(V^*) + \sum_{l=0}^{i-1} D_l.
\end{equation}
Let $E_{a,b}\in\gl_n$ be the matrix with entries $(\delta_{i,a}\delta_{b,j})_{1\le i,j\le n}$.
Note that
\[
e_\theta=E_{1,n},\quad c=\sum_{a=1}^n E_{a,a}.
\]
We first write
\begin{equation}\label{s}
e_\theta^{k+1-i} c^i=\left[(c-nE_{n,n})^i +
\sum_{j=0}^{i-1} n^{i-j}\bin{i}{j}  (c-nE_{n,n})^jE_{n,n}^{i-j} \right] E_{1,n}^{k+1-i}.
\end{equation}
Let us show that for any $j=0,\dots,i-1$
\begin{equation}\label{Enn}
E_{n,n}^{i-j}E_{1,n}^{k+1-i}\in S^{k+1}(V)\T S^{k+1}(V^*) + \sum_{l=0}^{i-1} D_l.
\end{equation}
For the $j=0$ term (because of Remark \ref{permute}) we have
\[
E_{n,n}^i E_{1,n}^{k+1-i}\in S^{k+1}(V)\T S^{k+1}(V^*).
\]
Note that since $E_{1,n}\in \msl_n$ the general $j$ case of \eqref{Enn}
follows from the statement
that for any $p\ge 0$
\begin{equation}\label{p}
E_{n,n}^p \in S(\msl_n)\mathrm{span}\{ 1,c,\dots, c^p\}
\end{equation}
We prove \eqref{p} by induction on $p$. For any $\al=1,\dots,n$ we have
\[
E_{n,n}^p - E_{n,n}^{p-1}E_{\al,\al}= E_{n,n}^{p-1} (E_{n,n}-E_{\al,\al}).
\]
By induction assumption $E_{n,n}^{p-1}\in S(\msl_n)\mathrm{span}\{ 1,c,\dots, c^{p-1}\}$ and
thus summing up over all $\al$
$$
nE_{n,n}^p - c E_{n,n}^{p-1}\in S(\msl_n)\mathrm{span}\{ 1,c,\dots, c^{p-1}\}.
$$
Using the induction assumption again we arrive at
$$
E_{n,n}^p \in S(\msl_n)\mathrm{span}\{ 1,c,\dots, c^p\}.
$$
This finishes the proof of the proposition.
\end{proof}

Recall (see \eqref{sym}) that $C(k)$ is isomorphic to the $\gl_{2n}$-module
$$
V(\underbrace{k,\dots,k}_n,\underbrace{0,\dots,0}_n).
$$ 
We endow $C(k)$ with a structure of $\msl_n$-module
via the chain of embeddings
\[
\msl_n\hk\gl_n\hk\gl_n\oplus\gl_n\hk\gl_{2n}.
\]
Here the first embedding is the standard one, the second is the diagonal embedding and
the last one comes from the isomorphism of $\gl_n\oplus\gl_n$
and the Levi subalgebra of $\gl_{2n}$, corresponding to $\omega_n$
($E_{i,j}\oplus E_{k,l}\mapsto E_{i,j}+E_{n+k,n+l}$).
So in what follows we consider $C(k)$ equipped with this structure of $\msl_n$-module.
\begin{thm}
We have an isomorphism of $\msl_n$-modules
\[
C(k)\simeq \bigoplus_{i=0}^k A_{[2]}(k-i).
\]
Moreover,
\[
C^m(k)\simeq \bigoplus_{i=0}^{\min(m,k)} A_{[2]}^{m-i}(k-i).
\]
\end{thm}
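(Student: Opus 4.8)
The plan is to exhibit a finite filtration of $C(k)$ whose successive quotients are the modules $A_{[2]}(k-i)$, and then to force the resulting surjections to be isomorphisms by a dimension count, after which the graded statement drops out by tracking degrees.

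First I would show that the filtration
\[
0=F_{-1}\hk F_0\hk\dots\hk F_k,\qquad F_i=\sum_{j=0}^i D_j,
\]
already exhausts $C(k)$. Since $\gl_n=\msl_n\oplus\C c$, the algebra $C(k)$ is generated over $S(\msl_n)$ by $c$, so $C(k)=\sum_{i\ge 0}D_i$ with $D_i=c^i\cdot D_0$. As $c$ is $\msl_n$-invariant, multiplication by $c$ is a map of $\msl_n$-modules sending $D_j$ onto $D_{j+1}$; feeding in the inclusion $D_{k+1}\hk F_k$ from Proposition~\ref{quot} shows that multiplication by $c$ preserves $F_k$, whence every $D_i=c^iD_0\hk F_k$ and $C(k)=F_k$. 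Proposition~\ref{quot} (with Lemma~\ref{sub} for $i=0$) then supplies surjections of $\msl_n$-modules $\psi_i\colon A_{[2]}(k-i)\twoheadrightarrow F_i/F_{i-1}$ for $i=0,\dots,k$, giving $\dim C(k)=\sum_{i=0}^k\dim(F_i/F_{i-1})\le\sum_{i=0}^k\dim A_{[2]}(k-i)$.

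The heart of the argument is the reverse inequality, which I would get as an equality from a bijection of indexing sets. By \eqref{2n}, $\dim C(k)=\sum_\la(\dim V(\la))^2$, summed over $\gl_n$-partitions $\la=(\la_1\ge\dots\ge\la_n)$ with $k\ge\la_1$ and $\la_n\ge 0$. The assignment $\la\mapsto(i,\beta)$ with $i=\la_n$ and $\beta=(\la_1-i,\dots,\la_{n-1}-i)$ is a bijection onto the pairs with $0\le i\le k$ and $\beta\in P_{k-i}^+$ (the inverse adds $i$ back to the first $n-1$ parts and appends $i$). Restriction from $\gl_n$ to $\msl_n$ preserves dimensions, so $\dim V(\la)=\dim V(\beta)$, and Theorem~\ref{theoremone} gives $\dim A_{[2]}(k-i)=\dim A(k-i)=\sum_{\beta\in P_{k-i}^+}(\dim V(\beta))^2$. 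Hence $\dim C(k)=\sum_{i=0}^k\dim A_{[2]}(k-i)$, which forces every $\psi_i$ to be an isomorphism. Since finite-dimensional $\msl_n$-modules are completely reducible the filtration splits, yielding $C(k)\simeq\bigoplus_{i=0}^k F_i/F_{i-1}\simeq\bigoplus_{i=0}^k A_{[2]}(k-i)$.

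Finally, the graded refinement comes essentially for free. Each $\psi_i$ is induced by $f\mapsto f\cdot c^i$, and $c$ has standard degree $1$, so $\psi_i$ carries $A_{[2]}^{p}(k-i)$ into $(F_i/F_{i-1})^{p+i}$; moreover $F_i/F_{i-1}$ has no components in degrees below $i$, since $D_i$ is generated over $S(\msl_n)$ by $c^i$. A graded isomorphism raising degree by a fixed $i$ therefore restricts in each degree to an isomorphism $A_{[2]}^{m-i}(k-i)\xrightarrow{\ \sim\ }(F_i/F_{i-1})^m$, and splitting $C^m(k)=\bigoplus_i(F_i/F_{i-1})^m$ (with the $i>m$ terms vanishing) gives $C^m(k)\simeq\bigoplus_{i=0}^{\min(m,k)}A_{[2]}^{m-i}(k-i)$. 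The step I expect to require the most care is not any single computation but the bookkeeping: confirming that the map underlying $\psi_i$ really is $f\mapsto fc^i$ with the claimed degree shift, and that the centrality (i.e.\ $\msl_n$-invariance) of $c$ is exactly what makes the finite filtration exhaust $C(k)$. The dimension-matching bijection is the one genuinely new ingredient, and it is elementary.
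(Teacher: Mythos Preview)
Your proof is correct and follows essentially the same route as the paper: the filtration by the $D_i$'s together with Proposition~\ref{quot} reduces the claim to the dimension equality $\dim C(k)=\sum_{i=0}^k\dim A_{[2]}(k-i)$, which you (and the paper) verify via the bijection $\la\mapsto(\la_n,\ (\la_1-\la_n,\dots,\la_{n-1}-\la_n))$ and Corollary~\ref{AA}. Your write-up is in fact more explicit than the paper's at two points the authors leave implicit---the argument that $F_k=C(k)$ (using $D_{k+1}\subset F_k$ and stability of $F_k$ under multiplication by $c$) and the degree-tracking for the graded refinement---and both of your explanations are clean and correct.
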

\begin{proof}
Because of Lemma \ref{sub} and Proposition \ref{quot} it suffices to prove that
\[
\sum_{j=0}^k \dim A_{[2]}(j)=\dim C(k).
\]
Because of Corollary \ref{AA} this is equivalent to
\[
\sum_{j=0}^k \dim A(j)=\dim C(k).
\]
Recall that
\begin{multline}\label{lan}
\dim C(k)
=\sum_{\substack{\la=(\la_1\ge\dots\ge \la_n)\\ k\ge \la_1,\ \la_n\ge 0}} (\dim V(\la))^2\\
=\sum_{\la_n=0}^k \sum_{\beta=(\la_1-\la_n,\dots,\la_{n-1}-\la_n)} (\dim V(\beta))^2,
\end{multline}
where $V(\beta)$ in the last line are  irreducible $\msl_n$-modules.
We note that if $\beta_i=\la_i-\la_n$ as above, then the $\msl_n$-module 
$V(\beta)$ and $\gl_n$-module $V(\la)$ are isomorphic as vector spaces.
Note also that if $\la_n$ is fixed, the restriction $k\ge \la_1$ turns into
$\beta_1\le k-\la_n$. Therefore, \eqref{lan} can be rewritten as
\[
\dim C(k)=\sum_{\la_n=0}^k \dim A(k-\la_n),
\]
which proves the theorem.
\end{proof}

\begin{cor}
We have an isomorphism of $\msl_n$-modules
\[
A^m_{[2]}(k)\simeq C^m(k)/C^{m-1}(k-1).
\]
\end{cor}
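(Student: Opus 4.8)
The plan is to read the statement off the graded part of the Theorem just proved, which asserts
\[
C^m(k)\simeq\bigoplus_{i=0}^{\min(m,k)}A_{[2]}^{m-i}(k-i),
\]
by a reindexing of the summation. So the essential content is already contained in that Theorem; what remains is to keep track of the degree shift and to interpret the quotient correctly. First I would split off the $i=0$ summand, giving
\[
C^m(k)\simeq A_{[2]}^m(k)\oplus\bigoplus_{i=1}^{\min(m,k)}A_{[2]}^{m-i}(k-i).
\]

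Next I would apply the same Theorem with $(k,m)$ replaced by $(k-1,m-1)$ and substitute $i=i'+1$:
\[
C^{m-1}(k-1)\simeq\bigoplus_{i'=0}^{\min(m-1,k-1)}A_{[2]}^{(m-1)-i'}\big((k-1)-i'\big)=\bigoplus_{i=1}^{\min(m,k)}A_{[2]}^{m-i}(k-i),
\]
where the upper limits match because $\min(m-1,k-1)+1=\min(m,k)$ (checking the two cases $m\le k$ and $m>k$ separately). Comparing the two displays yields the isomorphism of $\msl_n$-modules $C^m(k)\simeq A_{[2]}^m(k)\oplus C^{m-1}(k-1)$, which already contains the corollary.

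It remains to promote this direct sum to the quotient statement. Here I would exhibit $A_{[2]}^m(k)$ as a genuine \emph{submodule} of $C^m(k)$ using the filtration $D_0\subseteq D_0+D_1\subseteq\cdots$ from Lemma~\ref{sub} and Proposition~\ref{quot}: in degree $m$ the bottom piece $D_0^m$ is the image of $S^m(\msl_n)$, and by Lemma~\ref{sub} together with the dimension count underlying the Theorem it is identified with $A_{[2]}^m(k)$. The honest quotient $C^m(k)/D_0^m$ then has graded pieces $A_{[2]}^{m-i}(k-i)$ for $i\ge1$, so by the reindexing above it is isomorphic to $C^{m-1}(k-1)$; this produces a short exact sequence $0\to A_{[2]}^m(k)\to C^m(k)\to C^{m-1}(k-1)\to0$ of $\msl_n$-modules. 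Since $\msl_n$ is semisimple all the modules involved are completely reducible, so the sequence splits and one may equally well quotient by a complementary submodule isomorphic to $C^{m-1}(k-1)$, giving $A_{[2]}^m(k)\simeq C^m(k)/C^{m-1}(k-1)$ as stated.

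The one point to be careful about is exactly this interpretive step, and it is where I expect the only (minor) difficulty to lie: the grading bookkeeping must be tracked precisely, since the generator $c^i$ of $D_i$ carries degree $i$, and it is this extra degree that couples the drop $k\rightsquigarrow k-i$ in the level to the drop $m\rightsquigarrow m-i$ in the degree. Moreover the quotient in the statement should be understood via complete reducibility rather than through a naive level-raising map: multiplication by the central element $c$ does \emph{not} descend to a homomorphism $C(k-1)\to C(k)$ (for $n\ge 3$ already $c\cdot e_\theta^{\,k}$ acquires a nonzero component outside $S^{k+1}(V)\T S^{k+1}(V^*)$), so there is no obvious canonical embedding $C^{m-1}(k-1)\hk C^m(k)$, and it is the semisimplicity that makes the quotient well defined.
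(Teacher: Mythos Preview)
Your proposal is correct and follows exactly the approach the paper intends: the corollary is stated without proof in the paper, as an immediate consequence of the graded part of the Theorem via the reindexing $i\mapsto i-1$ that you carry out. Your additional discussion of how to interpret the quotient (via semisimplicity, since no canonical embedding $C^{m-1}(k-1)\hookrightarrow C^m(k)$ is provided) is more careful than the paper itself, which leaves this implicit, but is entirely in line with the intended reading.
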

\begin{cor}\label{GG}
We have an isomorphism of $\msl_n$-modules
\[
A^m_{[2]}(k)=\frac
{\bigoplus_{\substack{\la:\ k\ge \la_1,\ \la_n\ge 0\\ \la_1+\dots + \la_n=m}} V(\la)\T V(\la)^*}
{\bigoplus_{\substack{\la:\ k-1\ge \la_1,\ \la_n\ge 0\\ \la_1+\dots + \la_n=m-1}} V(\la)\T V(\la)^*},
\]
where the $\gl_n$-module $V(\la)$ is regarded as the $\msl_n$-module
with the highest weight $(\la_1-\la_n,\dots,\la_{n-1}-\la_n)$.
\end{cor}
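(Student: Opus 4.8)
The plan is to obtain Corollary~\ref{GG} as a direct substitution into the preceding corollary, which already supplies the isomorphism $A^m_{[2]}(k)\simeq C^m(k)/C^{m-1}(k-1)$ of $\msl_n$-modules. The entire content is therefore to make the two graded pieces $C^m(k)$ and $C^{m-1}(k-1)$ explicit and to read them as $\msl_n$-modules.

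First I would invoke \eqref{char}, which gives the $\gl_n\oplus\gl_n$-decomposition
\[
C^m(k)\simeq \bigoplus_{\substack{\la:\ k\ge \la_1,\ \la_n\ge 0\\ \la_1+\dots+\la_n=m}} V(\la)\T V(\la)^*,
\]
together with the same formula read at level $k-1$ and degree $m-1$,
\[
C^{m-1}(k-1)\simeq \bigoplus_{\substack{\la:\ k-1\ge \la_1,\ \la_n\ge 0\\ \la_1+\dots+\la_n=m-1}} V(\la)\T V(\la)^*.
\]
These are exactly the numerator and denominator of the asserted quotient. Next I would pass from the $\gl_n\oplus\gl_n$-structure to the $\msl_n$-structure. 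By construction the $\msl_n$-action on $C(k)$ is that of the diagonal $\gl_n\hookrightarrow\gl_n\oplus\gl_n$ restricted along $\msl_n\hookrightarrow\gl_n$; hence the diagonal $\gl_n$ acts on each summand $V(\la)\T V(\la)^*$ by the tensor product, and restricting the $\gl_n$-module $V(\la)$ to $\msl_n$ yields the irreducible module of highest weight $(\la_1-\la_n,\dots,\la_{n-1}-\la_n)$ by the restriction isomorphism recorded in Section~1. This is precisely the convention for $V(\la)$ stated in the corollary, so the two displayed sums are now read verbatim as $\msl_n$-modules, and plugging them into $A^m_{[2]}(k)\simeq C^m(k)/C^{m-1}(k-1)$ produces the claimed formula.

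The only point to keep track of is that, in the preceding corollary, the denominator $C^{m-1}(k-1)$ must be realized as an honest $\msl_n$-submodule of the numerator $C^m(k)$, so that the quotient in Corollary~\ref{GG} is the literal ``difference'' of the two direct sums and not merely an abstract isomorphism of modules of that shape. This realization is exactly what the preceding corollary provides, via the filtration of $C(k)$ by the submodules $\sum_{j=0}^{i}D_j$ whose graded pieces are the $A_{[2]}(k-i)$ (Lemma~\ref{sub} and Proposition~\ref{quot}); semisimplicity of finite-dimensional $\msl_n$-modules then identifies the relevant quotient with the complementary summand $A^m_{[2]}(k)$. With this compatibility acknowledged, Corollary~\ref{GG} is a formal rewriting, and I expect no genuine obstacle beyond this bookkeeping, the substantive work having already been carried out in establishing \eqref{char} and the preceding corollary.
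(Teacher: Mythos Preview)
Your proposal is correct and matches the paper's approach: the paper gives no separate proof of Corollary~\ref{GG}, treating it as an immediate restatement of the preceding corollary $A^m_{[2]}(k)\simeq C^m(k)/C^{m-1}(k-1)$ with the explicit decomposition \eqref{char} substituted for $C^m(k)$ and $C^{m-1}(k-1)$. Your additional remark that the denominator must be realized as a genuine $\msl_n$-submodule (and that this comes from the $D_j$-filtration together with semisimplicity) makes explicit a point the paper leaves implicit.
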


Corollary \ref{GG} is a restatement of the Conjecture of Gaberdiel and Gannon
(see \cite{GG}, formulas $(4.4)$ and $(4.5)$). Recall that, despite the suggesting formula, $A^m_{[2]}(k)$ is 
{\bf not} a $\msl_n\oplus \msl_n$-module.

\section{Abelian radicals and spherical modules}\label{general}
Let $G$ be a simple simply connected algebraic group with Lie algebra $\fg$. We fix a Borel subgroup
$B$ with Lie algebra $\fb$ and a maximal torus $T\subset B$ with Lie algebra $\ft$. Let $\Phi$
be the root system and let $\Phi^+$ be the set of positive roots and $\Delta$ the set of simple roots
corresponding to the choice of $\fb$.

Let $\fp$ be a maximal standard parabolic subalgebra of the simple Lie algebra $\fg$, i.e.
$\fp$ contains the fixed Borel subalgebra $\fb$.
Then $\fp$ is completely determined
by the only simple root $\alpha$ such that the root space $\fg_{-\alpha}$ is not contained in
$\fp$, we call $\-\alpha$ the simple root associated to $\fp$.

Let $\fp=\fl\oplus\fm$ be a Levi decomposition, i.e. $\fl$ is a reductive Lie subalgebra containing $\fl$
and $\fm$ is the nilpotent radical of $\fp$.

Let $P$ be the corresponding maximal parabolic subgroup of $G$ with Levi decomposition $P=LP^u$,
$L\supset T$. Then $L$ and $\fl$ act on $\fm$ via the adjoint action.

The opposite parabolic subalgebra is denoted by $\fp^-$. Let $\fp^-=\fl\oplus \fm^-$ be its Levi decomposition,
then $\fm^-\simeq\fm^*$ as $L$-module.
\subsection{Certain annihilators}
Corresponding to the choice of a maximal parabolic and its associated root $\alpha$ let $\omega$
be the associated fundamental weight. Let $\fn$ be the nilpotent radical of $\fb$ and let $\fn^-$ be the
nilpotent radical of the opposite Borel subalgebra $\fb^-$.

We define a $\bz$-grading on the root system $\Phi$ by setting for the simple roots
$\text{deg\,}\alpha=1$ and $\text{deg\,}\gamma=0$ for $\gamma\not=\alpha$.
Let $\fg=\bigoplus_{j\in\bz}\fg_j$ be the corresponding $\bz$-grading,
then
$$
\fl=\fg_0,\quad \fp=\bigoplus_{j\ge 0}\fg_j\quad\fm=\bigoplus_{j\ge 1}\fg_j\quad \text{and}\quad\fm^-=\bigoplus_{j\le -1}\fg_j
$$
We choose a basis $X_{-\beta}\in\fg_{-\beta}$, $\beta\in\Phi^+$, of root vectors for $\fn^-$.
Fix a highest weight vector $v_{k\omega}$ in $V(k\omega)$. For all $k\in\bn$ we have a surjective map
$$
\Psi:U(\fn^-)\rightarrow V(k\omega),\quad n\mapsto nv_{k\omega},
$$
the kernel being the left ideal in $U(\fn^-)$ generated by
$$
\{X_{-\alpha}^{k+1}, X_{-\gamma}\mid \gamma\in\Delta,\deg\gamma=0\}.
$$
The restriction of this map to $U(\fm^-)$ is already surjective:
$$
\pi_k:U(\fm^-)\rightarrow V(k\omega),\quad m\mapsto mv_{k\omega}.
$$
To describe a generating system for the kernel of $\pi_k$,
recall that $\fm^-$ is a $L$-module via the adjoint action of $\fl$ and $L$ on $\fg$.
We write $\ell\circ m$ for the adjoint action.
Set $\fl':=[\fl,\fl]$.
\begin{lem}\label{generalannihilator}
The kernel of $\pi_k$ is the left $U(\fm^-)$-ideal generated by the irreducible $L$-module
$\langle L\circ X_{-\alpha}^{k+1}\rangle=U(\fl') \circ X_{-\alpha}^{k+1}$, and $\pi_k$
induces an isomorphism of $L$-modules $U(\fm^-)/\ker\pi_k\simeq V(k\omega)\otimes\bc_{-k\omega}$,
where $\bc_{-k\omega}$ denotes the one-dimensional representation associated to the $L$-character $-k\omega$. 
\end{lem}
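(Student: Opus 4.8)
My plan is to pin down $\ker\pi_k$ by comparing $\pi_k$ with the map $\Psi\colon U(\fn^-)\to V(k\omega)$, whose kernel is already described above, the comparison being effected by a PBW projection onto $U(\fm^-)$.

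First I would record the structural input. Set $\fn_\fl^-:=\fn^-\cap\fl$, the span of the $X_{-\gamma}$ with $\deg\gamma=0$ together with their brackets; it is a subalgebra, and $[\fn_\fl^-,\fm^-]\subseteq\fm^-$ since $\fn_\fl^-\subseteq\fg_0$ and $\fm^-=\bigoplus_{j\le-1}\fg_j$. Because $\omega$ is the fundamental weight dual to $\alpha$, we have $\langle k\omega,\gamma^\vee\rangle=0$ for every simple root $\gamma$ of $\fl'$, whence $X_{-\gamma}v_{k\omega}=0$; combined with $\fn\, v_{k\omega}=0$ and the triviality of $k\omega$ on the Cartan of $\fl'$ this gives $\fl'\cdot v_{k\omega}=0$, so $U(\fn_\fl^-)$ acts on $v_{k\omega}$ through the counit $\epsilon$. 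From this I get at once that $\pi_k$ is $\fl'$-equivariant, that $\ker\pi_k$ is an $\fl'$-stable left $U(\fm^-)$-ideal, and that the weight shift by $k\omega$ yields the asserted $L$-module identification $U(\fm^-)/\ker\pi_k\simeq V(k\omega)\T\bc_{-k\omega}$. Since also $\langle k\omega,\alpha^\vee\rangle=k$ forces $X_{-\alpha}^{k+1}v_{k\omega}=0$, the easy inclusion follows: $X_{-\alpha}^{k+1}\in\ker\pi_k$, and $\ker\pi_k$ being an $\fl'$-stable left ideal gives $I:=U(\fm^-)\cdot(U(\fl')\circ X_{-\alpha}^{k+1})\subseteq\ker\pi_k$.

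For the reverse inclusion I would use the PBW isomorphism $U(\fn^-)\simeq U(\fm^-)\T U(\fn_\fl^-)$ (ordered monomials with $\fm^-$-letters first) and the projection $\rho=\mathrm{id}\T\epsilon\colon U(\fn^-)\to U(\fm^-)$, $\rho(ab)=\epsilon(b)\,a$, which restricts to the identity on $U(\fm^-)$ and is left $U(\fm^-)$-linear. Because $U(\fn_\fl^-)$ acts on $v_{k\omega}$ through $\epsilon$, one has $\Psi=\pi_k\circ\rho$, and hence $\rho(J)=\ker\pi_k$, where $J=\ker\Psi=U(\fn^-)X_{-\alpha}^{k+1}+\sum_{\deg\gamma=0}U(\fn^-)X_{-\gamma}$. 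So the reverse inclusion reduces to showing $\rho(J)\subseteq I$. For the $X_{-\gamma}$–relations this is free: $ab\,X_{-\gamma}=a\,(bX_{-\gamma})$ with $\epsilon(bX_{-\gamma})=0$, so $\rho(U(\fn^-)X_{-\gamma})=0$. For the $X_{-\alpha}^{k+1}$–relation, since $X_{-\alpha}\in\fg_{-1}\subseteq\fm^-$ and $\rho$ is left $U(\fm^-)$-linear, I reduce to the straightening identity
\[
\rho\bigl(b\,X_{-\alpha}^{k+1}\bigr)=b\circ X_{-\alpha}^{k+1}\qquad(b\in U(\fn_\fl^-)),
\]
that is, the $\fn_\fl^-$-constant part of $bX_{-\alpha}^{k+1}$ is the adjoint action of $b$. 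This gives $\rho(U(\fn^-)X_{-\alpha}^{k+1})\subseteq U(\fm^-)\cdot(U(\fl')\circ X_{-\alpha}^{k+1})=I$, whence $\ker\pi_k=\rho(J)\subseteq I$ and finally $\ker\pi_k=I$.

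The only genuine computation is the straightening identity, which I would prove by induction on $\deg b$ using $X_{-\gamma}c=(X_{-\gamma}\circ c)+cX_{-\gamma}$ for $c\in U(\fm^-)$ (valid because $\fm^-$ is $\fn_\fl^-$-stable, so $X_{-\gamma}\circ c\in U(\fm^-)$) and discarding the terms that end in a nontrivial $\fn_\fl^-$-letter. The conceptual crux, and the step I expect to carry the whole argument, is the factorisation $\Psi=\pi_k\circ\rho$, which rests entirely on $U(\fn_\fl^-)$ acting on $v_{k\omega}$ through the counit — equivalently on $\langle k\omega,\gamma^\vee\rangle=0$ for the simple roots of $\fl'$. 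I note that abelianness of $\fm^-$ plays no role here; it enters only later, when $U(\fm^-)$ is replaced by the symmetric algebra $S(\fm^-)$.
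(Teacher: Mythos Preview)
Your proof is correct and follows essentially the same route as the paper's: both compare $\pi_k$ with $\Psi$ via the identification $U(\fm^-)\simeq U(\fn^-)/U(\fn^-)\fn_\fl^-$ (your $\rho$ is precisely this quotient map), and both reduce the hard inclusion to the straightening identity $b\,X_{-\alpha}^{k+1}\equiv b\circ X_{-\alpha}^{k+1}\pmod{U(\fn^-)\fn_\fl^-}$ for $b\in U(\fn_\fl^-)$. Your presentation is slightly more explicit in naming the projection and the factorisation $\Psi=\pi_k\circ\rho$, but the mathematical content is identical; the one point the paper states that you leave implicit is the equality $U(\fn_\fl^-)\circ X_{-\alpha}^{k+1}=U(\fl')\circ X_{-\alpha}^{k+1}$ (and irreducibility of this $L$-module), which follows since $X_{-\alpha}^{k+1}$ is a highest weight vector for $\fl'$.
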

\proof
If $p_1,\ldots, p_r\in {\fm^-}$ and $X\in {\mathfrak \fl'}$, then
\begin{multline*}
X.p_1\cdots p_r=\sum_{j=1}^r p_1\cdots [X,p_j]\cdots p_r+p_1p_2\cdots p_rX\\
=X\circ (p_1\cdots p_r)+p_1p_2\cdots p_rX.
\end{multline*}
So if $m_1,\ldots,m_q$ are monomials in $U(\fm^-)$ and $\sum_{j=1}^q a_j m_j \in\ker\pi_k$,
then
$$
X\circ(\sum_{j=1}^q a_j m_j )= X.(\sum_{j=1}^q a_j m_j )-(\sum_{j=1}^q a_j m_j ).X
$$
annihilates $v_{k\omega}$ too since $X$ annihilates $v_{k\omega}$. Hence the left ideal
generated by $U(\fl') \circ X_{-\alpha}^{k+1}$ is contained in $\ker\pi_k$.
Set $\fn_0^-=\fn^-\cap\fl$, then
$$
\begin{array}{rcl}
\ker \Psi &=& U(\fn^-)X_{-\alpha}^{k+1}+ \sum_{\gamma\in \Delta,\deg \gamma =0} U(\fn^-)X_{-\gamma}\\
&=&U(\fm^-)U(\fn^-_0)X_{-\alpha}^{k +1}+ \sum_{\gamma\in \Delta,\deg \gamma =0} U(\fn^-)X_{-\gamma}\\
&=&U(\fm^-)\big(U(\fn^-_0)\circ X_{-\alpha}^{k +1}\big)+
\sum_{\gamma\in \Phi^+,\deg\gamma=0} U(\fn^-)X_{-\gamma}.\\
\end{array}
$$
Now $X_{-\alpha}$ (and hence $X_{-\alpha}^{k+1}$ too) is a highest weight vector for the action of
$U(\fl')$ and hence $U(\fl') \circ X_{-\alpha}^{k+1} = U(\fn^-_0)\circ X_{-\alpha}^{k +1}$. So
the image of $\ker\Psi$ in $U(\fn^-)/U(\fn^-).\fn_0^-\simeq U(\fm^-)$ coincides with the
left ideal described in the lemma, which shows the equality.

To make the induced isomorphism $U(\fm^-)/\ker\pi_k\simeq V(k\omega)$ $L$-equivariant,
we need to twist the representation by the appropriate character.
\endproof
\subsection{Abelian and spherical radicals}

The action of a reductive group $H$ on a (possibly infinite dimensional) locally finite representation space $V$  
is called {\it multiplicity free} if the decomposition of the $H$-module  $V$ into the direct sum of irreducible 
finite dimensional $H$-modules contains any irreducible $L$-module with multiplicity at most one. The action
of $H$ on an affine variety $M$ is called {\it multiplicity
free} if $\bc[M]$ is multiplicity free.

Note that the action of $L$ on $\bc[\fm^-]$ is multiplicity free if and only the action of $L$ on
$U(\fm^-)$ is multiplicity free. In particular, in this case the irreducible $L$-module in $U(\fm^-)$
spanned by the generating set for the ideal $\ker\pi_k$ (see Lemma~\ref{generalannihilator})
is uniquely determined by the highest weight $-k\alpha$.

Let $\theta$ denote the highest root of $\fg$. A simple root $\alpha$ is called {\it co-minuscule} if
$\alpha$ occurs with coefficient $1$ in the expression of $\theta$ as a sum of simple roots.
\begin{prop}
{\it i)} The nilpotent radical $\fm^-$ of $\fp^-$ is abelian if and only if the simple root $\alpha$ associated to $\fp$
is co-minuscule. In this case the action of $L$ on $\fm^-$ is irreducible.
(For a  list of co-minuscule weights see Remark~\ref{minu}.)

{\it ii)} The action of $L$ on $\bc[\fm^-]$ is multiplicity free if and only if either $\fm^-$ is
abelian, or $\alpha=\alpha_n$ for $\fg$ of
type ${\tt B}_n$, or  $\alpha=\alpha_1$ for $\fg$ of type ${\tt C}_n$.
\end{prop}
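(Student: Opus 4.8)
The plan is to reduce both statements to the $\bz$-grading $\fg=\bigoplus_{j\in\bz}\fg_j$ attached to $\alpha$, in which $\deg\gamma=c_\alpha(\gamma)$ is the coefficient of $\alpha$ in a root $\gamma$, $\fl=\fg_0$ and $\fm^-=\bigoplus_{j\le -1}\fg_j$.

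\emph{Part (i).} Since the highest root $\theta$ dominates every positive root in the root order, $\theta-\gamma$ is a non-negative integral combination of simple roots for each $\gamma\in\Phi^+$, whence $c_\alpha(\theta)=\max_{\gamma}c_\alpha(\gamma)$ is the top degree $d$ of the grading. If $d=1$, i.e. $\alpha$ is co-minuscule, then $\fm^-=\fg_{-1}$ and $[\fg_{-1},\fg_{-1}]\subseteq\fg_{-2}=0$, so $\fm^-$ is abelian. Conversely, because $\fg$ is generated in degrees $0,\pm1$ one has $\fg_{-2}=[\fg_{-1},\fg_{-1}]$; so if $d\ge2$ then $\fg_{-2}\ne0$ and $\fm^-$ is not abelian. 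For the irreducibility in the abelian case I would show that $X_{-\alpha}$ is a highest weight vector of $\fg_{-1}$ for the Borel $\fb_L=\fb\cap\fl$: for a simple root $\gamma\ne\alpha$ (necessarily of degree $0$), $\gamma-\alpha$ is a difference of distinct simple roots, hence not a root, so $[\,\fg_\gamma,X_{-\alpha}\,]=0$. Since the positive roots with $c_\alpha=1$ are obtained from $\alpha$ by successively adding degree-$0$ simple roots (using that the Dynkin diagram is connected), $U(\fl')X_{-\alpha}=\fg_{-1}$, and $\fm^-=\fg_{-1}$ is an irreducible $L$-module of highest weight $-\alpha$.

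\emph{Part (ii), the ``if'' direction.} I would first invoke the Vinberg--Kimelfeld criterion: $\bc[\fm^-]=S(\fm)$ is multiplicity free as an $L$-module if and only if $\fm^-$ is a spherical $L$-variety, i.e. a Borel $B_L\subset L$ has a dense orbit. For the abelian (co-minuscule) cases $G/P$ is Hermitian symmetric, so $(\fg,\fl)$ is a symmetric pair and $\fm^-$ is its isotropy module, which is multiplicity free by a classical fact about symmetric spaces; equivalently one reads this off the explicit Cauchy-type decompositions of $S(\fm)$ (the type-$A$ Cauchy formula used in Section~2, together with its symmetric and skew-symmetric analogues). For the two exceptional families I would identify $\fm^-$ concretely: for $C_n$ with $\alpha=\alpha_1$ one has $\fl=\gl_1\oplus\mathfrak{sp}_{2n-2}$ and $\fm^-\simeq W\oplus\bc$, a Heisenberg radical ($W$ the standard $\mathfrak{sp}_{2n-2}$-module, $\fg_{-2}=\bc$), while for $B_n$ with $\alpha=\alpha_n$ one has $\fl\simeq\gl_n$ and $\fm^-\simeq V\oplus\Lambda^2 V$ with $V$ the standard module; in each case a short direct computation (a dense $B_L$-orbit, or the decomposition of $S(\fm)$) establishes multiplicity-freeness.

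\emph{Part (ii), the ``only if'' direction, and the main obstacle.} For the converse I would pass through the remaining maximal parabolics with $\fg_{-2}\ne0$ and show that sphericality fails for every node other than $(B_n,\alpha_n)$ and $(C_n,\alpha_1)$. The practical tools are the necessary condition $\dim\fm^-\le\dim B_L=|\Phi^+_L|+\operatorname{rk}L$ and, when that does not already decide the issue, the exhibition of a repeated irreducible constituent in low degree of $S(\fm)$. The cleanest route is to match each pair $(L,\fm^-)$ against the classification of multiplicity-free linear actions (Kac in the irreducible case, Benson--Ratcliff and Leahy in general) and to note that among pairs arising from maximal parabolics with non-abelian nilradical the only survivors are the two listed. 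This only-if step is where the real work lies: it is a finite but genuine case analysis over the non-co-minuscule nodes of types $B,C,D$ and of the exceptional types, each of which must be excluded either by the dimension/complexity inequality or by producing an explicit multiplicity in $\bc[\fm^-]$.
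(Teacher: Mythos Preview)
Your argument for part (i) is correct and essentially matches the paper's: both of you use the $\bz$-grading and observe that $\fm^-$ abelian $\Leftrightarrow$ $\fg_{-2}=0$ $\Leftrightarrow$ $c_\alpha(\theta)=1$, with irreducibility coming from the fact that $X_{-\alpha}$ is a highest weight vector for $\fl'$ in $\fg_{-1}$.

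For part (ii) your approach is valid but genuinely different from the paper's. You stay on the linear side: you treat the question as one about the $L$-module $\fm^-$, handle the abelian cases via the Hermitian-symmetric/Cauchy decompositions, do the two Heisenberg-type cases by hand, and for the converse appeal to the Kac and Benson--Ratcliff/Leahy classifications of multiplicity-free linear actions together with dimension counts. The paper instead makes one geometric reduction that dispatches both directions at once: since spherical $\Leftrightarrow$ multiplicity free for affine varieties, and since the exponential and the open-cell embedding $P^{-,u}\hookrightarrow G/P$ are $L$-equivariant, the $L$-action on $\bc[\fm^-]$ is multiplicity free if and only if $L$ acts spherically on $G/P$. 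The list of such maximal parabolics is exactly the classification in \cite{L1}, and it yields precisely the co-minuscule cases together with $(\mathtt{B}_n,\alpha_n)$ and $(\mathtt{C}_n,\alpha_1)$. What the paper's route buys is that the ``only if'' direction---which you correctly flag as the laborious part---comes for free from an existing classification of spherical flag varieties, with no need to run through the non-co-minuscule nodes or to match against the general tables of multiplicity-free representations. Your route has the compensating virtue of being self-contained on the representation-theoretic side (and your identifications of $\fm^-$ in the two exceptional cases are correct), but it is noticeably longer to execute in full.
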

\begin{rem}\label{minu}
\rm
Let $\Delta=\{\alpha_1,\ldots,\alpha_n\}$ be the simple roots, indexed as in
\cite{B}, Planches I-IX. According to the tables in \cite{B} we find the following list of
co-minuscule roots $\alpha_i$, or, equivalently, the list of roots such that the corresponding maximal parabolic
subalgebra has an abelian nilpotent radical: in type ${\tt A}_n$ all roots are co-minuscule, 
in type ${\tt B}_n$ only the root $\alpha_1$ and
in type ${\tt C}_n$ only the root $\alpha_n$ are co-minuscule.
In type ${\tt D}_n$ the roots $\alpha_1,\alpha_{n-1}, \alpha_n$ and
in type ${\tt E}_6$ the roots $\alpha_1,\alpha_{6}$ are co-minuscule. 
In type ${\tt E}_7$ only the root $\alpha_7$ is co-minuscule.
For the types ${\tt E}_8$, ${\tt F}_4$ and ${\tt G}_2$ there are no maximal parabolic subalgebras
with abelian nilpotent radical.
\end{rem}
\proof
The first part {\it i)} is well known:
Let $\Phi_\fm^+$ be the positive roots occurring in $\fm$. If the last condition holds, then for two roots
$\beta,\gamma\in \Phi_\fm^+$ the sum is never a root and hence the commutator of the two root subspaces
is zero. If the condition fails, then we can find two roots $\beta,\gamma\in \Phi_\fm^+$ such that the sum is a root,
and hence the corresponding root subspaces fail to commute. It follows now from elementary properties of root systems
that the $\fl$-action on $\fm$ is irreducible, having the root $\alpha$ as lowest weight vector.

A comparison of the list of co-minuscule roots with the tables in \cite{K3} shows that in these
cases the representations of $L$ on $\fm$ are multiplicity free.

Another approach, which gives the complete list at once is the following: the action
of a reductive group $H$ on a variety $Y$ is called {\it spherical} if a Borel subgroup of $H$ has a dense orbit in $Y$.
It is know for $Y$ affine that the conditions spherical and multiplicity free are equivalent.
The action of $L$ on $\fm$ is spherical if and only if it is so for the action on $\fm^-$. Via the exponential
we see that the action of $L$ on is $\fm^-$ is spherical if and only if the action of $L$ on the unipotent radical $P^{-,u}$
of the opposite parabolic subgroup is so.
Now $P^{-,u}$ can be identified in a $L$-equivariant way with the open cell $P^{-,u}.id\subset G/P$, so we
see that the action of $L$ on $\fm$ is multiplicity free if and only if the action of $L$ on $G/P$ is spherical.
These maximal parabolic subgroups have been classified in \cite{L1}.
\endproof

\subsection{Type $A$ case}\label{typeA}
In this subsection we work out explicitly the type $A$ case of the general constructions
explained in Section \ref{general} above.

So let $\g=\gl_N$ and let
\[
\omega_i=(\underbrace{1,\dots,1}_i), \quad i=1,\dots,N
\]
 be some fundamental weight.
Then $\fl=\gl_i\oplus \gl_{N-i}$ and the abelian radical $\fm^-$ is isomorphic
to $(\C^i)^*\T \C^{N-i}$ as $\fl$-module ($\C^i$ and $\C^{N-i}$ are vector representations of
$\gl_i$ and $\gl_{N-i}$).
In what follows we deal with $\gl_N$, $\gl_i$ and $\gl_{N-i}$-modules simultaneously,
so we use the notation $V_j(\la)$ to denote the irreducible $\gl_j$-module
with highest weight $\la$. 

The $\fl$-module $U(\fl)\circ X_{-\al_i}^{k+1}$ is isomorphic to
\begin{equation}\label{sym}
S^{k+1} (\C^{i*})\T S^{k+1} (\C^{N-i})\hk S^{k+1}(\fm^-).
\end{equation}
We write $(k^i)$ for the partition $(\underbrace{k,\ldots,k}_i) $. 
Then Lemma \ref{generalannihilator} in type $A$ reads as:
\begin{equation}\label{repasalgebra}
V_N((k^i))\otimes \bc_{-k\omega_i}\simeq S(\C^{i*}\T \C^{N-i})/\bra S^{k+1} (\C^{i*})\T S^{k+1} (\C^{N-i}) \ket.
\end{equation}

In the following lemma we describe $V_N((k^i))$ as $\fl=\gl_i\oplus \gl_{N-i}$-module
(the $\fl$ action on $V_N(k\omega_i)$ coming from the adjoint action of
$\fl$ on $\fm^-$).

\begin{lem}\label{branching}
The module $S(\C^{i*}\T \C^{N-i})/\bra S^{k+1} (\C^{i*})\T S^{k+1} (\C^{N-i}) \ket$
(respectively $V_N((k^i))\otimes \bc_{-k\omega_i}$) is
isomorphic (as $\gl_i\oplus \gl_{N-i}$-module)
to the direct sum
\begin{equation}\label{ALevi}
\bigoplus_\la V_i(\la)^*\T V_{N-i}(\la),
\end{equation}
where the sum is running over all partitions $\la=(\la_1\ge\dots\ge \la_{\min(i,N-i)})$
such that $k\ge \la_1$.
\end{lem}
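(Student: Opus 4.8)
The plan is to take Cauchy's formula as the backbone and reduce everything to identifying one ideal inside a multiplicity-free algebra. First I would apply the Cauchy decomposition (as in \cite{P}, \S9, or \cite{FH}) to the full symmetric algebra, writing $S(\C^{i*}\T\C^{N-i})\simeq\bigoplus_\la S_\la(\C^{i*})\T S_\la(\C^{N-i})=\bigoplus_\la V_i(\la)^*\T V_{N-i}(\la)$, the sum over partitions with $\ell(\la)\le\min(i,N-i)$ (here $S_\la(\C^{i*})=(S_\la\C^i)^*=V_i(\la)^*$). Under this grading the generating subspace $S^{k+1}(\C^{i*})\T S^{k+1}(\C^{N-i})$ is exactly the single Cauchy summand indexed by the one-row partition $(k+1)$. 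Since this summand is $GL_i\times GL_{N-i}$-stable and the Cauchy decomposition is multiplicity free, the ideal $I$ it generates is automatically a sum of Cauchy components, $I=\bigoplus_{\la\in\mathcal S}V_i(\la)^*\T V_{N-i}(\la)$. The whole lemma thus reduces to showing $\mathcal S=\{\la:\la_1\ge k+1\}$.

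For the inclusion $I\subseteq\bigoplus_{\la_1\ge k+1}$ I would argue by equivariance of multiplication. The product map is $GL_i\times GL_{N-i}$-equivariant, so the $\nu$-component of a product of the $\mu$- and $\rho$-summands can be nonzero only if $V_i(\nu)^*$ occurs in $S_\mu(\C^{i*})\T S_\rho(\C^{i*})$, i.e. only if the Littlewood-Richardson coefficient $c^\nu_{\mu\rho}$ is nonzero. Since $c^\nu_{\mu\rho}\ne 0$ forces the diagram containment $\nu\supseteq\mu$, taking $\mu=(k+1)$ gives $\nu_1\ge k+1$ for every component occurring in $I$.

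For the reverse inclusion I would fix $\la$ with $\la_1\ge k+1$ and $\ell(\la)\le\min(i,N-i)$. Because $(k+1)$ is a single row, Pieri's rule says $c^\la_{(k+1),\rho}\ne 0$ precisely when $\la/\rho$ is a horizontal strip of size $k+1$, and such a $\rho\subseteq\la$ exists exactly because $\la_1\ge k+1$ (the largest removable horizontal strip has size $\la_1$). The remaining point, which I expect to be \emph{the main obstacle}, is the genuine nonvanishing of the multiplication map $[S^{k+1}(\C^{i*})\T S^{k+1}(\C^{N-i})]\cdot[V_i(\rho)^*\T V_{N-i}(\rho)]\to V_i(\la)^*\T V_{N-i}(\la)$ onto each Pieri-admissible target: the LR coefficient only controls \emph{which} components may appear, not that they are actually produced. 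I would establish this either by evaluating on highest weight vectors, using that the highest weight vector of the $(k+1)$-summand is the monomial $x_{11}^{k+1}$ and that highest weight vectors in the Cauchy algebra are products of principal minors (Remark~\ref{permute} gives the explicit symmetrized generators), or by quoting the determinantal-type ideal description of De Concini--Eisenbud--Procesi \cite{DEP}. Iterating single-box Pieri multiplication by $\C^{i*}\T\C^{N-i}$ starting from the generator then reaches every such $\la$, giving $\bigoplus_{\la_1\ge k+1}V_i(\la)^*\T V_{N-i}(\la)\subseteq I$.

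Combining the two inclusions yields $I=\bigoplus_{\la_1\ge k+1}V_i(\la)^*\T V_{N-i}(\la)$, so the quotient is $\bigoplus_{\la_1\le k}V_i(\la)^*\T V_{N-i}(\la)$, which is the first assertion of the lemma. The parenthetical statement about $V_N((k^i))\otimes\bc_{-k\omega_i}$ then follows directly from the isomorphism \eqref{repasalgebra} of Lemma~\ref{generalannihilator}, once I record that the character twist $\bc_{-k\omega_i}$ converts the $GL_i$-weights produced by the restriction into their duals, matching the $V_i(\la)^*$ on the Cauchy side. I would also mention the alternative derivation: the same decomposition can be read off directly from the restriction of the rectangular $\gl_N$-module $V_N((k^i))$ to the Levi $\gl_i\oplus\gl_{N-i}$, using the complementation property of Littlewood--Richardson coefficients for a rectangle, which is precisely what forces the bound $\la_1\le k$ and the appearance of the dual $V_i(\la)^*$.
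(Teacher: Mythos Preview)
Your proposal is correct. Your primary argument --- Cauchy decomposition followed by an analysis of which components lie in the ideal via Littlewood--Richardson and Pieri --- is more explicit than what the paper does: the paper simply remarks that this direction follows from \cite{DEP}, and then presents as its main argument the alternative you mention only at the end, namely the Levi branching of the rectangular $\gl_N$-module $V_N((k^i))$. That branching route is slicker here because it sidesteps entirely the nonvanishing of the multiplication map that you correctly flag as the main obstacle: once \eqref{repasalgebra} is in hand, one just cuts the $i\times k$ rectangle into two complementary shapes $\la$ and $(k-\la_i,\dots,k-\la_1)$, observes $V_i((k^i-\la))\simeq V_i(\la)^*\otimes\bc_{k\omega_i}$, and reads off the decomposition. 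Your approach, by contrast, is independent of \eqref{repasalgebra} and works purely inside the symmetric algebra, at the cost of needing either the explicit description of highest weight vectors as products of minors or the determinantal ideal results of \cite{DEP} to close the surjectivity gap.
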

\begin{proof} Using \eqref{repasalgebra}, the lemma can be deduced from \cite{DEP}.
An alternative approach is to use branching rules for Levi subgroups
of $GL_N$. The rule is due to Littlewood, the formulation
used here can be found in \cite{L2}.

We briefly explain how the formula \eqref{ALevi} shows up in type $A$.
The partition $(k^i)$  is represented by the rectangular Young diagram
having $i$-rows and $k$-columns. The restriction formula
for Levi subalgebras can in this case be read as follows: the irreducible components
of $V_N((k^i))$ as $\gl_i\oplus \gl_{N-i}$-module are of the form
$V_i((k^i-\la)) \otimes V_{N-i}(\la)$ where the partition $\la$ is obtained by cutting the
rectangle $(k^i)$ into two partitions: $\la=(\la_1,\ldots,\la_i)$ and
$(k^i-\la)=(k-\la_i,\dots, k-\la_1)$. Here we assume $\la_{N-i+1}=\dots=\la_i=0$ if $i>N-i$.
But note that $V_i((k^i-\la))\simeq V_i(\la)^*\T\bc_{k\omega_i}$, which proves the lemma.
\end{proof}

\section*{Acknowledgments}
The work of EF was partially supported by the Russian President Grant MK-281.2009.1,
by the RFBR Grants 09-01-00058, 07-02-00799 and NSh-3472.2008.2, by Pierre Deligne fund
based on his 2004 Balzan prize in mathematics and by Alexander von
Humboldt Fellowship. The work of BF was partially supported by 
RFBR  initiative interdisciplinary project grant 09-02-12446-ofi-m and by
RFBR-CNRS grant 09-02-93106.
The work of PL was partially supported by the 
priority program SPP 1377 of the German Science Foundation.


\begin{thebibliography}{99}
\bibitem[ABD]{ABD}
T. Abe, G. Buhl, and C. Dong, {\it Rationality, regularity, and 
$C_2$-cofiniteness}, Trans. Amer. Math. Soc. 356 (2004) 3391-3402.

\bibitem[BF]{BF}
D.~Ben-Zvi and E.~Frenkel, Vertex algebras and algebraic curves,
Mathematical Surveys and Monographs, {\bf 88}, AMS, 2001.

\bibitem[B]{B}
N.~Bourbaki, \emph{\'{E}l\'ements de math\'ematique. {F}asc. {XXXIV}. {G}roupes
et alg\`ebres de {L}ie. {C}hapitres {IV}, {V}, {VI}}, Actualit\'es
Scientifiques et Industrielles, No. 1337, Hermann, Paris, 1968.

\bibitem[CF]{CF}
N.~Carqueville, M.~Flohr,
{\it Nonmeromorphic operator product expansion and $C_2$-cofiniteness for a family of W-algebras},
arXiv:math-ph/0508015.

\bibitem[DEP]{DEP}
C. DeConcini, D. Eisenbud,, C. Procesi,
{Young diagrams and determinantal varieties}, Inv. math. 56 (1980), 129 - 165. 

\bibitem[DLM]{DLM} C.~Dong, H.~Li, G.~Mason,
{\it Twisted representations of vertex operator algebras}, Math. Annalen.
310 (1998), 571-600.

\bibitem[DM]{DM}
C.~Dong, G.~Mason, {\it Integrability of $C_2$-cofinite vertex operator algebras},
arXiv:math/0601569.

\bibitem[F]{F}
E.~Feigin, {\it The PBW filtration}, Represent. Theory 13  (2009), 165-181. 

\bibitem[FH]{FH}
W.~Fulton, J.~Harris, {\it Representation Theory}, Graduate Texts in Mathematics, Springer Verlag,
New York 1991.

\bibitem[FZ]{FZ}
I.~Frenkel, Y.~Zhu, {\it Vertex operator algebras associated to representations of affine
and Virasoro algebras}, Duke Math. J. 66 (1992), 123--168.

\bibitem[GG]{GG}
M.~R.~Gaberdiel, T.~Gannon, {\it Zhu's algebra, the $C_2$ algebra, and twisted modules},
arXiv:0811.3892

\bibitem[GabGod]{GabGod} M.~R.~Gaberdiel, P. Goddard,
{\it Axiomatic conformal field theory}, Commun. Math. Phys. 209
(2000), 549-594.

\bibitem[GK]{GK} M.~R.~Gaberdiel, H.~G.~Kausch,
{\it A rational logarithmic conformal field theory}, Phys. Lett.
B386 (1996), 131-137.

\bibitem[GN]{GM} M.~R.~Gaberdiel, A.~Neitzke, {\it Rationality, quasirationality
and finite W-algebras}, Commun. Math. Phys. 238 (2003), 305-331.

\bibitem[GT]{GT}
I. Gelfand, M. Tseitlin, {\it Finite-dimensional representations of the group of unimodular matrices,}
pp. 653-656 in: I. Gelfand, Collected papers, volume II, Springer, 1988. Originally appeared
in Doklady akademii nauk SSSR, nowaja serija 71 (1950), 825-828.

\bibitem[HK]{HK}
J.~Hong and S.-J.~Kang, Introduction to quantum groups and crystal bases,
Grad. Stud. Math. vol. 42, Amer. Math. Soc., Providence, RI (2002).

\bibitem[K1]{K1}
V.~Kac, Infinite dimensional Lie algebras, 3rd ed.,
Cambridge University Press, Cambridge, 1990.

\bibitem[K2]{K2}
V.~Kac, Vertex algebras for begginers, University Lecture Series,
{\bf 10}, 1997.

\bibitem[K3]{K3}
V.~Kac, \emph{Some remarks on nilpotent orbits}, J. Algebra {\bf 64} (1980), pp. 190--213.

\bibitem[Li]{Li} H. Li, {\it Some finiteness properties of regular vertex operator algebras},
J. Algebra 212 (1999), 495-514.

\bibitem[L1]{L1}
P.~Littelmann, \emph{On spherical double cones}, J. Algebra {\bf 166} (1994), pp. 142--157.

\bibitem[L2]{L2}
P. Littelmann, {\it A Littlewood-Richardson rule for symmetrizable Kac-Moody algebras}, Inventiones
mathematicae 116 (1994), 329-346.

\bibitem[LL]{LL}
J. Lepowsky and H.-S. Li, Introduction to Vertex Operator Algebras
and Their Representations, Progress in Math. 227, Birkh{\" a}user,
Boston, 2003.

\bibitem[M1]{M1} M.~Miyamoto, {\it Modular invariance of vertex operator algebras satisfying
$C_2$-cofinitness},  Duke Math. J. 122 (2004), 51-91.

\bibitem[M2]{M2}
M. Miyamoto, {\it A theory of tensor products for vertex operator algebra
satisfying C2-cofiniteness}, math.QA/0309350.

\bibitem[N]{N} A.~Neitzke,
{\it Zhu's theorem and an algebraic characterization of chiral blocks},
arXiv: hepth/0005144.

\bibitem[P]{P}
C.~Procesi, Lie groups, An approach to Lie Theory through Invariants and Representations,
Universitext, Springer Verlag, New York, (2007).




\bibitem[Z]{Z}
Y.~Zhu, {\it Modular invariance of characters of vertex operator algebras},
J. Amer. Math. Soc. 9 (1996), 237--302.
\end{thebibliography}
\end{document}